\documentclass[a4paper,draft]{article}
\usepackage{amssymb,amsmath,amsthm}

\theoremstyle{plain}
\newtheorem{theorem}{\sc Theorem}[section]
\newtheorem{lemma}{\sc Lemma}[section]
\newtheorem{proposition}{\sc Proposition}[section]

\theoremstyle{definition}
\newtheorem{example}{\sc Example}[section]

\newcommand{\calB}{\mathcal{B}}
\newcommand{\Nd}{\mathbb{N}}
\newcommand{\Qd}{\mathbb{Q}}
\newcommand{\Rd}{\mathbb{R}}
\newcommand{\Zd}{\mathbb{Z}}
\newcommand{\eps}{\varepsilon}
\newcommand{\e}{\mathrm{e}}
\newcommand{\Prob}{\mathsf{P}}
\newcommand{\Mean}{\mathsf{E}}
\newcommand{\maps}{\mathrel{\colon}} 
\newcommand{\tends}[1]{\xrightarrow[#1]{}}
\let\0=\varnothing
\newcommand{\1}{\mathsf{1}}

\newcommand{\Ceiling}[1]{\left\lceil #1 \right\rceil}
\newcommand{\floor}[1]{\lfloor #1 \rfloor}
\newcommand{\Floor}[1]{\left\lfloor #1 \right\rfloor}
\newcommand{\tsum}{{\textstyle{\sum}}}

\newcommand{\rint}{\mathrm{int}}
\newcommand{\clos}[1]{\overline{#1}}                      
\renewcommand{\le}{\leqslant}
\renewcommand{\ge}{\geqslant}
 
\newcommand{\conv}{\mathrm{conv}}
\newcommand{\cone}{\mathrm{cone}}
\newcommand{\abs}[1]{\lvert #1\rvert} 
\newcommand{\norm}[1]{\lVert #1\rVert} 
\newcommand{\be}{\setminus}
\newcommand{\cups}{\cup\cdots\cup}

\newcommand\refeq[1]{{\rm (\ref{e:#1})}}

\usepackage{natbib}
\bibliographystyle{plainnat}

\newcommand{\aff}{\mathrm{aff}}
\newcommand{\lin}{\mathrm{lin}}
\let\Lin=\lin
\newcommand{\gp}{\mathrm{gp}}
\newcommand{\sg}{\mathrm{sg}}
\newcommand{\verts}{\mathrm{vert}}

\title{An ergodic theorem for subadditive random functions on vector semigroups}
\author{Vytautas Kazakevi\v cius\footnote{vytautas.kazakevicius@mif.vu.lt}\\Vilnius university}

\begin{document}
\maketitle

\begin{abstract}
Let $f=(f^x\mid x\in S)$, $S\subset\Zd^m$, be a semigroup of ergodic measure-preserving transformations of a probability space $(\Omega,\Prob)$ and $h$ a real random function on $S$, such that $h(x+y,\omega)\le h(x,\omega)+h(y,f^x\omega)$ for all $x,y\in S$ and $\omega\in\Omega$. We prove that there exists a sublinear function $q\maps O\to[-\infty;\infty)$ defined on $O=\rint(\cone(S))$, and a set $W\subset\Omega$ of full probability, such that $h(x_n,\omega)/\abs{x_n}\to q(x)$ for all $\omega\in W$ and all sequences $(x_n)\subset S$ with asymptotic direction $x\in O$. The moment condition for this reflects the size of the semigroup $f$, not that of $S$. However, an additional independence assumption about $h$ is made.

\emph{Keywords:} vector semigroup, gauge, subadditive ergodic theorem, first passage percolation.
\end{abstract}

\section{Introduction}

The main theorem of this paper is inspired by two results in two seemingly unrelated areas of probability: the Cox-Durrett shape theorem well known in the theory of first passage percolation, and our recent subadditive ergodic theorem for double sequences which proved useful in studying IARCH processes, well known in econometrics. Let us describe these results in more detail.

Consider the graph $G=(S,E)$ with the set of vertices $S=\Zd^2$ and the set of edges $E=\{\{x,y\}\mid \abs{x-y}=1\}$, where $\abs{x}=\abs{x_1}+\abs{x_2}$ for $x=(x_1,x_2)\in S$. Let $(\eps_e\mid e\in E)$ be a family of independent copies of some non-negative random variable $\eps$ and, for all $x,y\in S$,
\begin{equation*}
  \Delta(x,y)=\inf_{\gamma\in\Gamma(x,y)}\tau_{\gamma},\quad \tau_{\gamma}=\sum_{i=1}^n\eps_{\{x_{i-1},x_i\}} \text{ for $\gamma=(x_0,\dots,x_n)$},
\end{equation*}
where $\Gamma(x,y)$ is the set of all paths from $x$ to $y$ in $G$. The value of $\eps_e$ is interpreted as the time needed for, say, water to percolate through edge $e$. Then $\Delta(x,y)$ is the first moment when the water appears at $y$ if the percolation started at $x$. It is easily shown that $\Delta$ is a random semi-metric on $S$.

Denote $h(x)=\Delta(0,x)$. We may assume that all random variables are defined on the sample probability space $(\Omega,\Prob)=(\Rd_+^E,\lambda^E)$, where $\lambda$ is the distribution of $\eps$, and $\eps_e(\omega)=\omega_e$ for $\omega=(\omega_e\mid e\in E)\in\Omega$. Then the triangle inequality for $\Delta$ yields the following subadditivity property of $h$:
\begin{equation}\label{e:h}
  h(x+y,\omega)\le h(x,\omega)+h(y,f^x\omega),
\end{equation}
where $f^x\omega=(\omega_{x+e}\mid e\in E)$ and $x+e=\{x+y,x+z\}$ for $e=\{y,z\}\in E$. It is easily seen that each $f^x$ is a measure-preserving transformation of $\Omega$ and, for all $x,y\in S$,
\begin{equation}\label{e:f}
  f^{x+y}=f^xf^y,
\end{equation}
where the product of two transformations is understood as their composition.

Under appropriate conditions, the Kingman's subadditive ergodic theorem \citep{Kingman-68,Kingman-73} yields the existence of a norm $q$ on $\Rd^2$, such that for all $x\in S$ almost surely
\begin{equation}\label{e:q}
  \frac{h(nx)}{n}\tends{n\to\infty}q(x).
\end{equation}
The Cox-Durrett shape theorem \citep[Theorem~3]{Cox-81} can be equivalently reformulated as follows: there exists a $W\subset\Omega$ with $\Prob(W)=1$, such that
\begin{equation}\label{e:shape}
  \frac{h(x_n,\omega)}{\abs{x_n}}\to q(x)
\end{equation}
for all $\omega\in W$ and all $(x_n)\subset S$ with
\begin{equation}\label{e:adir}
  \abs{x_n}\to\infty\quad\text{and}\quad\frac{x_n}{\abs{x_n}}\to x.
\end{equation}
If \refeq{adir} holds, we call $x$ the \emph{asymptotic direction} of $(x_n)$.

In econometrics, an IARCH process $(X_t\mid t\in\Zd)$ is defined as a stationary solution to the system of equations
\begin{equation*}
  X_t=\Bigl(a_0+\sum_{i\ge 1}a_iX_{t-i}\Bigr)\eps_t,\quad t\in\Zd,
\end{equation*}
where $a_0>0$, $(a_i\mid i\ge 1)$ is a sequence of nonnegative numbers with $\sum_{i\ge 1}a_i=1$ and $(\eps_t\mid t\in\Zd)$ is a family of independent copies of a nonnegative random variable $\eps$ with $\Mean\eps=1$. It is known that such a process exists if and only if almost surely
\begin{equation}\label{e:sum}
  \sum_{\substack{k\ge 1\\i_1,\dots,i_k\ge 1}}a_{i_1}\cdots a_{i_k}\eps_{i_1}\eps_{i_1+i_2}\cdots\eps_{i_1+\cdots+i_k}<\infty.
\end{equation}
However, this condition is not easily checked in practice. The state of the art technique \citep{Kazak-2018} consists in writing this sum as $\sum_{k,n}\eta_{k,n}$, where
\begin{equation*}
  \eta_{k,n}=\sum_{i_1+\cdots+i_k=n}a_{i_1}\cdots a_{i_k}\eps_{i_1}\cdots\eps_{i_1+\cdots+i_k},
\end{equation*}
and using the subadditive ergodic theorem for double sequences \citep{Kazak-2018b} to get an exponential upper bound for the main part of the sum.

Again, we can assume that all random variables in \refeq{sum} are defined on the sample probability space $(\Omega,\Prob)=(\Rd_+^\Nd,\lambda^{\Nd})$, where $\lambda$ is the distribution of $\eps$, and $\eps_i(\omega)=\omega_i$ for $\omega=(\omega_i)\in\Omega$. Denote
\begin{equation*}
  S=\{(k,n)\in\Nd^2\mid\exists i_1,\dots,i_k\ge 1\ (a_{i_1}\cdots a_{i_k}>0,\ i_1+\cdots+i_k=n)\}
\end{equation*}
and, for $(k,n)\in S$, $h(k,n)=-\log\eta_{k,n}$. Then $S$ is an additive semigroup and $h$ a random function on it which satisfies \refeq{h} with measure-preserving transformations $f^x$ defined by $f^{(k,n)}\omega=(\omega_{i+n}\mid i\ge 1)$. Under appropriate conditions, applying the Kingman's subadditive ergodic theorem yields \refeq{q} with some function $q\maps S\to[-\infty;\infty)$, which can be extended in a unique way to a function defined on $O=\rint(\cone(S))$ (the interior of the convex cone generated by $S$; we call $O$ the \emph{asymptotic cone of $S$}). The extended function is a \emph{gauge}, i.e.\ a convex and positively homogeneous function on $O$. Theorem~2 of \citet{Kazak-2018b} can be reformulated as follows: there exists a $W\subset\Omega$ with $\Prob(W)=1$, such that \refeq{shape}~holds for all $\omega\in W$ and all $(x_n)\subset S$ with asymptotic direction $x\in O$.

A natural question arises if there can be proved a general theorem including both the Cox-Durrett shape theorem and Theorem~2 of \citet{Kazak-2018b} as special cases. Let us think what such a theorem would look like. We are given some additive semigroup $S\subset \Zd^m$ (let us call it a \emph{vector semigroup}), a probability space $(\Omega,\Prob)$, and a family $f=(f^x\mid x\in S)$ of measure-preserving transformations satisfying \refeq{f}. If $S$ contains 0, we additionally assume that $f^0$ is the identity transformation. We call $f$ \emph{an action} of $S$ on the probability space $(\Omega,\Prob)$. We call an action $f$ \emph{ergodic} if, for all $x\in S\be\{0\}$, the probability $\Prob$ is $f^x$-ergodic, i.e.\ $\Prob(W)\in\{0,1\}$ for any measurable $W\subset\Omega$ with $(f^x)^{-1}(W)=W$.

Next, we are given some family $h=(h(x)\mid x\in S)$ of random variables. We write $h(x,\omega)$ for the value of $h(x)$ at the point $\omega\in\Omega$ and think of $h$ as of a random function from $S$ to $\Rd$. We call that function \emph{$f$-subadditive} if \refeq{h} holds for all $\omega\in\Omega$ and $x,y\in S$. If $h$ is $f$-subadditive then, for all $x\in S$, the sequence $(h(nx)\mid n\ge 1)$ is subadditive in the usual sense and the Kingman's subadditive ergodic theorem can be applied. It yields (provided $\Mean h^+(x)<\infty$) the existence of $q(x)\in[-\infty;\infty)$ such that almost surely \refeq{q} holds. In general, $q(x)$ is a random variable (and so $q$ is a random function on $S$). However, it is almost $f^x$-invariant and therefore almost surely equals some constant if the probability $\Prob$ is $f^x$-ergodic. Hence, if the action $f$ ir ergodic, we may assume that $q(x)$ is non-random for all $x\ne 0$. We want $q(0)$ to be non-random as well, so we supplement the definition of $f$-subadditivity by the requirement $h(0)=0$, in case, where $0\in S$.

It follows from \refeq{q} that, for all $x,y\in S$ and $t\in\Nd$,
\begin{equation*}
  q(x+y)\le q(x)+q(y)\quad\text{and}\quad q(tx)=tq(x),
\end{equation*}
so we may call $q$ a \emph{$\Zd$-gauge} on $S$. It can be shown that every $\Zd$-gauge is extended in a unique way to a gauge on the asymptotic cone of the semigroup $S$. We denote the extended function by the same letter $q$ and call it \emph{the gauge associated with $h$}.

The Cox-Durrett shape theorem is valid if $\Mean h(x)^2<\infty$ for all $x\in S=\Zd^2$. The corresponding assumption in Theorem~2 of \citet{Kazak-2018b} is $\Mean h^+(x)<\infty$. So the moment condition reflects the size of the semigroup $\{f^x\mid x\in S\}$ of measure-preserving transformations and not that of the semigroup $S$, which in both cases is two-dimensional (we define the dimension of the semigroup $S$ as $\dim\lin(S)$, the dimension of the linear subspace generated by $S$). To take account of this, we should assume that the action $f$ is of the form
\begin{equation}\label{e:g}
  f^x=g^{\pi(x)},
\end{equation}
where $g$ is an action of some $l$-dimensional semigroup $T$ and $\pi$ is a semigroup homomorphism from $S$ to $T$, i.e.\ $\pi(x+y)=\pi(x)+\pi(y)$ for all $x,y\in S$. We call $\pi$ \emph{nontrivial} if $\pi(x)\ne 0$ for some $x\ne 0$. For example, in Theorem~2 of \citet{Kazak-2018b} \refeq{g} holds with $T=\Nd$, $\pi(k,n)=n$ and $g^n$ defined by $g^n\omega=(\omega_{i+n}\mid i\ge 1)$. The theorem we are targeting should state that if $\Mean (h^+(x))^l<\infty$ for all $x$ then \refeq{shape} holds for any $\omega$ in a set $W$ of full probability and all $(x_n)\subset S$ with asymptotic direction $x\in O$.
Unfortunately, we managed to prove it only in the case, where some additional rather strong condition is satisfied.

We call a family of random variables $(Z_y\mid y\in T)$ \emph{almost independent} if there exists a $c<\infty$, such that, for all $A,B\subset T$ with $\rho(A,B)>c$, the subfamilies $(Z_y\mid y\in A)$ and $(Z_y\mid y\in B)$ are independent. Here $\rho(A,B)=\inf_{x\in A,y\in B}\abs{x-y}$ is the usual distance between sets $A$ and $B$ in the space $\Rd^n$ comprising $T$. The main result of the paper is the following theorem.

\begin{theorem}\label{t:main}
Let $l\ge 1$, $T$ be an $l$-dimensional vector semigroup, $(g^y\mid y\in T)$ its ergodic action on a probability space $(\Omega,\Prob)$, $S$ another vector semigroup, $\pi\maps S\to T$ a nontrivial semigroup homomorphism and $f^x=g^{\pi(x)}$ for all $x\in S$. Further let $h$ be an $f$-subadditive random function on $S$ dominated by some nonnegative random function $h_+$, such that, for all $a\in S$,

(a) $h_+(a)\in L^l(\Prob)$ and

(b) the family $(h_+(a,g^y)\mid y\in T)$ is almost independent.

\noindent Let $O$ denote the asymptotic cone of the semigroup $S$, and $q$ the gauge associated with $h$. Then there exists a measurable $W\subset \Omega$, such that \refeq{shape} holds for all $\omega\in W$ and all $(x_n)\subset S$ with asymptotic direction $x\in O$.
\end{theorem}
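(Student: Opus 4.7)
My plan is to derive \refeq{shape} by combining the pointwise Kingman theorem along a countable dense set of directions in $S$ with a uniform maximal estimate on the $l$-dimensional translates $g^y$, powered by hypotheses~(a) and~(b).

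First I would note that $q$ extends to a continuous function on the open cone $O$, since a gauge on an open convex cone is automatically continuous. Picking a countable subset $D\subset S\cap O$ whose rays are dense in $O$ and applying the classical Kingman subadditive ergodic theorem to each $a\in D$ with respect to the ergodic transformation $f^a$, I obtain a set $W_0$ of full probability on which $h(na,\omega)/n\to q(a)$ for every $a\in D$.

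For a general sequence $(x_n)\subset S$ with asymptotic direction $x\in O$, I would choose $a_1,\dots,a_k\in D$ so that $x$ lies in the relative interior of $\cone(a_1,\dots,a_k)$ and $\sum_i\beta_iq(a_i)$ approximates $q(x)$ whenever $x=\sum_i\beta_ia_i$ with $\beta_i>0$ (possible by continuity of $q$). For large $n$ the point $x_n$ admits a decomposition $x_n=\sum_i k_i^{(n)}a_i+r_n$ with $k_i^{(n)}\in\Nd$ of order $\abs{x_n}$ and $r_n\in S$ of bounded norm; iterating \refeq{h} yields
\begin{equation*}
  h(x_n,\omega)\le\sum_i h\bigl(k_i^{(n)}a_i,f^{u_i^{(n)}}\omega\bigr)+h\bigl(r_n,f^{v^{(n)}}\omega\bigr),
\end{equation*}
with $u_i^{(n)},v^{(n)}\in S$ satisfying $\abs{\pi(u_i^{(n)})},\abs{\pi(v^{(n)})}=O(\abs{x_n})$. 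A dual decomposition supplies the matching lower bound: choose $\tilde a\in D$ close to $x/\abs{x}$, find $M_n\in\Nd$ with $M_n\tilde a-x_n\in S$ and $M_n\sim\abs{x_n}/\abs{\tilde a}$, and use \refeq{h} in the form $h(x_n,\omega)\ge h(M_n\tilde a,\omega)-h_+(M_n\tilde a-x_n,f^{x_n}\omega)$.

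The main obstacle is that $h(k_i^{(n)}a_i,f^{u_i^{(n)}}\omega)/k_i^{(n)}$ need not tend to $q(a_i)$ by plain Kingman, since $u_i^{(n)}$ varies with $n$. The plan is to prove a uniform Kingman statement: for every $a\in D$ and $c>0$,
\begin{equation*}
  \max_{y\in T,\,\abs{y}\le cN}\frac{\bigl|h(Na,g^y\omega)-Nq(a)\bigr|}{N}\tends{N\to\infty}0\quad\text{a.s.}
\end{equation*}
The box $\{y\in T:\abs{y}\le cN\}$ contains at most $O(N^l)$ points, and by~(b) its translates decouple into almost independent groups, so a Borel--Cantelli argument combining the pointwise Kingman limit, the $L^l$ moment bound in~(a), and a deviation estimate for subadditive processes should yield the required uniform $o(N)$ bound---the moment exponent $l$ matches the $l$-dimensional volume $N^l$ of the box to the natural scale $N$ of $\abs{x_n}$. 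The residual $h(r_n,f^{v^{(n)}}\omega)$ is handled by the same device with $r_n$ ranging over a finite set of bounded vectors. Intersecting the countably many full-probability sets arising for $a\in D$ gives the desired $W$.
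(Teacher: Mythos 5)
Your plan correctly identifies the obstacle (the big Kingman pieces sit at moving base points $g^{u_i^{(n)}}\omega$), but the way you propose to overcome it contains a genuine gap, and the paper resolves it by a different decomposition.

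Your claimed ``uniform Kingman statement''
\begin{equation*}
  \max_{y\in T,\ \abs{y}\le cN}\frac{\bigl|h(Na,g^{y}\omega)-Nq(a)\bigr|}{N}\tends{N\to\infty}0\quad\text{a.s.}
\end{equation*}
is not a consequence of (a), (b) and Kingman. The moment exponent $l$ is tuned to control single-step quantities over the box $\{\abs{y}\le cN\}$ of cardinality $O(N^l)$: Proposition~\ref{t:max} in the paper gives $\max_{\abs{y}\le N}h_+(a,g^y\omega)=o(N)$ a.s.\ precisely because a Borel--Cantelli sum $\sum_n n^{l-1}\Prob(h_+(a)>\delta n)$ converges under $\Mean h_+(a)^l<\infty$. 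Your statement, by contrast, asks for uniform control of the \emph{deviation of length-$N$ partial sums from their limit} over the same box. Kingman's theorem supplies no rate, subadditive processes need not concentrate at any polynomial scale under an $L^l$ hypothesis alone, and there is no ``deviation estimate for subadditive processes'' available here that would make the Borel--Cantelli series converge. So the crucial step of your proof is unsupported.

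The paper avoids this entirely by a different decomposition. Instead of writing $x_n=\sum_i k_i^{(n)}a_i+r_n$ with $k$ large shifted pieces, it picks a single rational direction $y\in\rint(C)\cap S^*$ with $y^i/x^i$ within $\epsilon$ of $1$, and splits $x_n=s_ny+(x_n-s_ny)$ where $s_n\sim s\abs{x_n}$ and $x_n-s_ny\in\sg(A)$ has all coordinates $O(\epsilon\abs{x_n})$. The main term $h(s_ny,\omega)$ is \emph{unshifted}: Kingman along the fixed ray $\Nd y$ at the base point $\omega$ gives $h(s_ny,\omega)/\abs{x_n}\to s\,q(y)$, with no need for uniformity in a shift. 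Only the small remainder is expanded into single-step quantities $h_+(a_i,f^{\cdot}\omega)$ at moving base points, and these are controlled by the maximal ergodic theorem (Theorem~\ref{t:maxerg}), which is exactly the device that consumes (a) and (b) via Proposition~\ref{t:M} and Borel--Cantelli. A dual split via $t_ny$ gives the matching lower bound. Letting $\epsilon\to0$ then closes the argument. If you want to salvage your route, you would have to prove the uniform Kingman estimate, which seems at least as hard as the theorem itself; the practical fix is to adopt the paper's single-direction decomposition so that no uniform control over large shifted partial sums is needed.
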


Although condition (b) looks bad in the context of ergodic theorems, it is satisfied in any application of Theorem~\ref{t:main} we can think of. Consider, for example, the model of first passage percolation on $\Zd^2$. For $c>0$ denote $h_c(a)=\inf_{\gamma\in\Gamma_c(0,a)}\tau_{\gamma}$, where $\Gamma_{c}(0,a)$ is the set of all $\gamma=(x_0,\dots,x_n)\in\Gamma(0,a)$, such that $\abs{x_i}\le c$ for all $i=1,\dots,n$. Since $\Gamma_c(0,a)\subset\Gamma(0,a)$, each $h_c$ dominates $h$. It is well known (see \citet{Cox-81}) that $\Mean h(a)^l<\infty$ if and only if $\Mean \min(\eps_1,\dots,\eps_4)^l<\infty$, where $\eps_1,\dots,\eps_4$ are 4 independent copies of $\eps$. The same proof applies also for $h_c(a)$ if $c$ is large enough ($c\ge\abs{a}+1$, to be more precise). Hence $\Mean h_c(a)^l<\infty\iff \Mean h(a)^l<\infty$. Moreover, $h_c(a,f^x)$ is defined by random variables $\eps_{\{y,z\}}$ with $\abs{y-x}\le c$, $\abs{z-x}\le c$. Therefore if $A,B\subset S$ and $\rho(A,B)>2c$ then $(h_c(a,f^x)\mid x\in A )$ and $(h_c(a,f^x)\mid x\in B)$ are independent families of random variables.

Of course, it is an interesting question if condition (b) in Theorem~\ref{t:main} can be dropped, and we intend to investigate it in the near future. Note, however, that the best known result for first passage percolation in $\Zd^m$ in the "non-independent" case \citep{Boivin-90} is obtained under a stronger assumption than $\Mean h(a)^m<\infty$ (although $\Mean h(a)^{m+\delta}<0$ for some $\delta>0$ is enough).

We believe that Theorem~\ref{t:main} is not only interesting as a natural generalization of the Cox-Durrett shape theorem, but can also serve as a tool for solving some open problems in the theory of first passage percolation. For example, define
\begin{equation*}
  \tilde h(x,k)=\inf_{\gamma\in\tilde\Gamma(x,k)}\tau_{\gamma}\quad\text{for $(x,k)\in\Zd^2\times\Nd$},
\end{equation*}
where $\tilde\Gamma(x,k)$ ir the set of all paths from $0$ to $x$ of length $k$. It is easily checked that
\begin{equation*}
  \tilde h(x+y,k+l,\omega)\le\tilde h(x,k,\omega)+\tilde h(y,l,f^x\omega),
\end{equation*}
therefore $\tilde h$ is a subadditive random function on $\Zd^2\times \Nd$. Moreover, Theorem~\ref{t:main} applies to it under the same moment condition $\Mean h(x)^2<\infty$, as in the Cox-Durrett shape theorem. The analogous trick proved to be useful in studying the IARCH processes, and we hope it will be helpful as well in analyzing, for example, strict convexity of the limit norm $q(x)$ in first passage percolation.

The plan of the paper is simple: in Section~\ref{s:semi} we establish some properties of vector semigroups and in Section~\ref{s:proof} prove Theorem~\ref{t:main}.

Throughout the paper $\Zd$, $\Qd$ and $\Rd$ denote, respectively, the set of all integer, rational and real numbers and $\Zd_+$, $\Qd_+$, $\Rd_+$ are their subsets consisting of nonnegative numbers. We also denote  $\Nd=\Zd_+\be\{0\}$ and call the numbers in $\Nd$ \emph{natural}. We work mainly in $\Rd^m$, and $\abs{\cdot}$ denotes some fixed norm in that space, $U(a,r)$ stands for the open ball with center $a$ and radius $r$, $[x;y]$ is the segment with endpoints $x$ and $y$. For $A\subset\Rd^m$, $\aff(A)$, $\lin(A)$, $\conv(A)$ and $\cone(A)$ denote, respectively, the affine, linear, convex and the conical hull of $A$. For convex $A$, $\rint(A)$ denotes the \emph{relative interior} of $A$, that is the interior in the space $\aff(A)$.

We constantly use the fact that the set of all linearly independent families $(a_1,\dots,a_k)$ is open in $(\Rd^m)^k$. The reason for this is that all determinants
\begin{equation}\label{e:d}
  d=\begin{vmatrix}
    a_{1j_1}&a_{2j_1}&\cdots&a_{kj_1}\\
    a_{1j_2}&a_{2j_2}&\cdots&a_{kj_2}\\
    \vdots&\vdots&\ddots&\vdots\\
    a_{1j_k}&a_{2j_k}&\cdots&a_{kj_k}\\
  \end{vmatrix}
\end{equation}
are continuous functions of $(a_1,\dots,a_k)$, and the family $(a_1,\dots,a_k)$ is linearly independent when at least one of these determinants differs from 0 (in\refeq{d}, $a_{ij}$ denotes the $j$th component of the vector $a_i$). Hence if $(a_1,\dots,a_k)$ is a basis of some linear subspace $L$ and the vectors $b_i\in L$ are close enough to $a_i$ then $(b_1,\dots,b_k)$ is also the basis of $L$. Moreover, the change of coordinates of some fixed vector $x\in L$ is arbitrary small, if the perturbation of a basis is small enough. This is because the coordinates of $x$ in the basis $(a_1,\dots,a_k)$ are equal to $d_i/d$, where
\begin{equation}\label{e:dj}
  d_i=\begin{vmatrix}
    a_{1j_1}&\cdots&a_{i-1,j_1}&x_{j_1}&a_{i+1,j_1}&\cdots&a_{kj_1}\\
    a_{1j_2}&\cdots&a_{i-1,j_2}&x_{j_2}&a_{i+1,j_2}&\cdots&a_{kj_2}\\
    \vdots&\ddots&\vdots&\vdots&\vdots&\ddots&\vdots\\
    a_{1j_k}&\cdots&a_{i-1,j_k}&x_{j_k}&a_{i+1,j_k}&\cdots&a_{kj_k}\\
  \end{vmatrix},
\end{equation}
and this determinant is also a continuous function of $(a_1,\dots,a_k)$. The same argument proves that if $x,a_1,\dots,a_k\in\Qd^m$ then all coordinates of $x$ are rational, and if $x,a_1,\dots,a_k\in\Zd^m$ then all coordinates of $x$ belong to $d^{-1}\Zd$ for some natural $d$.

\section{Vector semigroups}\label{s:semi}

\paragraph{Cones.}

Recall that a \emph{convex cone} in $\Rd^m$ is a subset $C$ with the following two properties: $x+y\in C$ and $sx\in C$ for all $x,y\in C$ and $s>0$. Two linear spaces are associated with every convex cone $C$ containing 0: the space $\lin(C)=C-C$ and the so-called \emph{lineality space} $L_0=C\cap(-C)$. The latter is the greatest linear subspace contained in $C$. If it is trivial (that is, if $L_0=\{0\}$) the cone is called \emph{pointed}.

If $A=\{a_1,\dots,a_k\}$ is a finite set then
\begin{equation*}
  \lin(A)=\Rd a_1+\cdots+\Rd a_k\quad\text{and}\quad
  \cone(A)=\Rd_+a_1+\cdots+\Rd_+a_k.
\end{equation*}
Clearly, $\cone(A)\subset\lin(A)$ and therefore $\lin(\cone(A))=\lin(A)$. Finitely generated convex cones are called \emph{polyhedral}. Each polyhedral cone is a closed set and contains 0.

For convenience, we use some concepts from convex analysis (for exact definitions and proofs see, e.g., \citet{Bruns-2009}). If $A$ is a finite set then its convex hull $P=\conv(A)$ is a \emph{polytope}. If $P$ is a polytope then the minimal set $A_0$ satisfying $P=\conv(A_0)$ is unique and is denoted by $\verts(P)$. The points in $\verts(P)$ are \emph{vertices} of $P$. If the set $\verts(P)$ is affinely independent, $P$ is called a \emph{simplex}.

The following fact is a simple implication of the theorem about stellar triangulation, but we did not find the proof in the literature so we provide our own. For the notions of a polytopal complex and a triangulation we refer again to \citet{Bruns-2009}.

\begin{lemma}\label{t:Cara}
1. Let $A$ be a finite set, $a\in A$ and $P=\conv(A)$. Then
\begin{equation*}
  P=\bigcup_{a\in B\in\calB}\conv(B),
\end{equation*}
where $\calB$ is the set of all affinely independent $B\subset A$ with $\aff(B)=\aff(A)$.

2. Let $A$ be a finite set, $0\ne a\in A$ and $C=\cone(A)$. If $C$ is pointed then
\begin{equation*}
  C=\bigcup_{a\in B\in\calB}\cone(B),
\end{equation*}
where $\calB$ is the set of all $B\subset A$ that form a basis of $\lin(A)$.
\end{lemma}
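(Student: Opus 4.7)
The plan is to prove part 1 by induction on $d=\dim\aff(A)$ using a stellar-subdivision argument anchored at $a$, and to reduce part 2 to part 1 by slicing the pointed cone $C$ with a hyperplane on which a chosen linear functional takes the value $1$.

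For part 1, the case $d=0$ is trivial. For the inductive step fix $x\in P$; if $x=a$ it is enough to extend $\{a\}$ to a maximal affinely independent subset of $A$, which is possible because $A$ affinely spans $\aff(A)$. Otherwise the ray $\{a+t(x-a):t\ge 0\}$ leaves the compact set $P$ at a point $y=a+t_*(x-a)$ with $t_*\ge 1$ lying on some facet $F$ of $P$, with $F=\conv(A\cap F)$ and $\aff(F)$ a hyperplane $H$ in $\aff(A)$ satisfying $F=P\cap H$. Two cases arise. If $a\in F$ then $x\in F$ as well, and the inductive hypothesis applied to $A\cap F$ with base point $a$ yields an affinely independent $B'\subset A\cap F$ with $a\in B'$, $\aff(B')=\aff(F)$, and $x\in\conv(B')$; enlarging $B'$ by any $c\in A\setminus\aff(F)$ (which exists since $\aff(A)\neq\aff(F)$) gives the required $B$. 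If $a\notin F$ then $a\notin H$ because $a\in P$ and $F=P\cap H$; the inductive hypothesis applied to $A\cap F$ with an arbitrary base point produces an affinely independent $B'\subset A\cap F$ with $\aff(B')=\aff(F)$ and $y\in\conv(B')$, and then $B=B'\cup\{a\}$ works since $a\notin\aff(B')$ and $x\in[a,y]\subset\conv(B)$.

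For part 2, pointedness of $C$ furnishes a linear functional $\phi$ that is strictly positive on $C\setminus\{0\}$. Put $A'=\{a'/\phi(a'):a'\in A\setminus\{0\}\}$; a brief computation gives $P:=\conv(A')=C\cap\{\phi=1\}$, and since $0\notin\aff(A')$, a subset $B'\subset A'$ is affinely independent with $\aff(B')=\aff(A')$ if and only if any $A$-preimage of $B'$ (one representative per element) is a basis of $\lin(A)$. Given $y\in C\setminus\{0\}$ apply part 1 inside $P$ at the distinguished point $a/\phi(a)$ and the auxiliary point $y/\phi(y)$, then rescale by $\phi(y)$ to produce the required conical combination in a basis $B\subset A$ containing $a$; the case $y=0$ needs only that $\{a\}$ can be extended to a basis of $\lin(A)$ drawn from $A$. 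The only bookkeeping issue is that the map $a\mapsto a/\phi(a)$ may identify distinct points of $A$ lying on a common ray, but choosing a section once (and keeping $a$ itself as the representative of $a/\phi(a)$) resolves this.

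The main obstacle I anticipate is in part 1, ensuring that the ray exits through a \emph{facet} (so that $\aff(F)$ has codimension exactly $1$ in $\aff(A)$ and the inductive $B'$ needs only a single added point to span $\aff(A)$); this rests on the standard fact that the relative boundary of a polytope is the union of its facets, together with the clean description $F=P\cap H$, which immediately yields $a\notin F\Rightarrow a\notin\aff(F)$.
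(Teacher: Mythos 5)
Your proof is correct, and for part~1 it takes a genuinely different route from the paper's. The paper invokes the machinery of polytopal complexes: starting from the subcomplex $\Pi_0$ of faces of $P$ not containing $a$, it cites Theorem~1.51 and Lemma~1.50 of Bruns--Gubeladze to produce a stellar triangulation of $P$ coned from $a$, and then reads the sets $B$ off the vertex sets of the maximal simplices. You instead argue by induction on $\dim\aff(A)$, projecting from $a$ along the ray through $x$ to its exit point $y$ on a facet $F$, and splitting according to whether $a\in F$ (in which case $x\in F$ and one supplements the inductive $B'\subset A\cap F$ by any $c\in A\setminus\aff(F)$) or $a\notin F$ (in which case $a\notin\aff(F)$, so $B=B'\cup\{a\}$ is affinely independent and $x\in[a;y]\subset\conv(B)$). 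Your version is self-contained, resting only on the elementary facts that each facet satisfies $F=P\cap H=\conv(A\cap F)$, that the relative boundary of a polytope is the union of its facets, and that adjoining a point off a hyperplane preserves affine independence and raises the affine span to $\aff(A)$; the paper's version is shorter but delegates the combinatorial content to the cited triangulation theorems. For part~2 both arguments are essentially identical: slice $C$ by the level set $\{\phi=1\}$ of a positive linear functional (whose existence is pointedness), identify the cross-section with $\conv(a'/\phi(a'))$, apply part~1 there, and rescale; your explicit remark about choosing a section of the rescaling map is a detail the paper glosses over, but it does not affect correctness on either side.
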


\begin{proof}
1. Let $\Pi$ be the set of all faces of $P$ and $\Pi_0=\{Q\in\Pi\mid a\not\in Q\}$. Then $\Pi$ is a polytopal complex and $\Pi_0$ its subcomplex. By Theorem~1.51 of \citet{Bruns-2009}, there exists a triangulation $\Pi_0'$ of $\Pi_0$ with $\verts(\Pi_0')=\verts(\Pi_0)$. Denote
\begin{equation*}
  \Pi'=\Pi_0'\cup\{\conv(Q',a)\mid Q'\in\Pi_0'\}.
\end{equation*}
By Lemma~1.50 of \citet{Bruns-2009}, $\Pi'$ is a triangulation of $\Pi$. Clearly, $P\not\in\Pi_0$ and therefore $Q'\subset\partial P$ for all $Q'\in\Pi_0'$. Hence
\begin{equation*}
  \rint(P)\subset\bigcup_{Q'\in\Pi_0'}\conv(Q',a).
\end{equation*}
Since the set $\Pi_0'$ is finite, the union in the right hand side is a closed set and therefore it also covers $P=\clos{\rint(P)}$ (the latter equality is valid for any closed convex $P$, see \citet[Theorem~6.3]{Rockafellar-72}). Each pyramid  $Q=\conv(Q',a)$ is a polytope, and therefore coincides with $\conv(B)$, where $B=\verts(Q)$. Clearly, $a\in B$ and $B\subset\verts(\Pi_0')\cup\{a\}=\verts(\Pi_0)\cup\{a\}\subset A$. Moreover, $B$ is affinely independent, since $Q$ is a simplex. Without loss of generality we can assume that $\aff(B)=\aff(A)$, then $B\in\calB$.

2. Without loss of generality we can assume that $0\not\in A$. By Proposition~1.21 of \citet{Bruns-2009}, there exists a linear functional $v$ with the following properties: $v(x)>0$ for all $0\ne x\in C$, the set $P=\{x\in C\mid v(x)=1\}$ is a polytope and $C=\cone(P)$. Clearly, $P=\conv(x/v(x)\mid x\in A)$. Fix any $0\ne x\in C$. Then $x/v(x)\in P$ and, by statement 1 of the lemma, $x/v(x)\in\conv(y/v(y)\mid y\in B)$ for some $B\subset A$ with the following three properties: (1) $a\in B$, (2) the family $(x/v(x)\mid x\in B)$ is affinely independent, and (3) $\aff(x/v(x)\mid x\in B)=\aff(x/v(x)\mid x\in A)$.

If $\sum_{x\in B}s_xx=0$ for some $s_x\in\Rd$ then
\begin{equation*}
  \sum_{x\in B}s_xv(x)=0\quad\text{and}\quad \sum_{x\in B}s_xv(x)\frac{x}{v(x)}=0.
\end{equation*}
By property (2), $s_xv(x)=0$ and $s_x=0$ for all $x$. Hence $B$ is linearly independent. If $z\in A$ then, by property (3), $z/v(z)=\sum_{x\in B}s_x\frac{x}{v(x)}$ with some $s_x\in\Rd$ which sum up to 1. Then $z=\sum_{x\in B}t_xx$ with $t_x=s_xv(z)/v(x)$. Therefore $A\subset\lin(B)$ and then $\lin(A)=\lin(B)$. Hence $B\in\calB$.
\end{proof}

The properties of cones that will be needed later are summarized in the following proposition.

\begin{proposition}\label{t:linealas}
Let $A$ be a finite set, $C=\cone(A)$, $L=\lin(C)$ and $L_0=C\cap(-C)$. Denote $A_0=\{x\in A\mid -x\in C\}$ and $A_1=A\be A_0$. Then:

1) $L_0=\cone(A_0)=\lin(A_0)$,

2) for all $a\in A_1$,
\begin{equation}\label{e:CLB}
  C=L_0+\bigcup_{a\in B\in\calB}\cone(B),
\end{equation}
where $\calB$ is the set of all $B\subset A_1$, such that $(x+L_0\mid x\in B)$ is a basis of quotient space $L/L_0$.
\end{proposition}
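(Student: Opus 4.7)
The plan is to pass to the quotient space $L/L_0$ via the canonical projection $\pi\colon L\to L/L_0$; the image $\pi(C)$ becomes a pointed polyhedral cone generated by $\pi(A_1)$ (since $\pi$ kills $A_0$), to which Lemma~\ref{t:Cara}(2) can be applied. The preliminary step is to verify pointedness: if $\pi(c_1)$ and $-\pi(c_1)$ both lie in $\pi(C)$, pick $c_2\in C$ with $\pi(c_2)=-\pi(c_1)$; then $c_1+c_2\in L_0\subset C$, hence $-c_1-c_2\in C$, and adding $c_1$ gives $-c_2\in C$, so $c_2\in L_0$ and $\pi(c_1)=0$.

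For Part~1, the inclusions $\cone(A_0)\subset\lin(A_0)\subset L_0$ are immediate since $L_0$ is a linear subspace containing $A_0$. For the reverse inclusion, I would take $y\in L_0\subset C$ and write $y=\sum_{x\in A}s_x x$ with $s_x\ge 0$. Applying $\pi$ gives $0=\sum_x s_x\pi(x)$ inside the pointed cone $\pi(C)$; in a pointed cone any nonnegative combination equal to zero forces every summand to vanish (otherwise some $s_i\pi(x_i)\ne 0$ would satisfy $s_i\pi(x_i)\in\pi(C)\cap(-\pi(C))=\{0\}$). Since $\pi(x)\ne 0$ precisely when $x\in A_1$, every coefficient $s_x$ with $x\in A_1$ vanishes, and $y\in\cone(A_0)$.

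For Part~2, the inclusion $L_0+\cone(B)\subset C$ is clear because both summands lie in $C$ and $C$ is additively closed. Conversely, given $y\in C$, I look at $\pi(y)\in\pi(C)=\cone(\pi(A_1))$. The identity $\lin(\pi(A_1))=L/L_0$ follows from Part~1 together with $\lin(A)=L$. Since $\pi(C)$ is pointed and $\pi(a)\ne 0$, Lemma~\ref{t:Cara}(2) supplies a subset $B'\subset\pi(A_1)$ that forms a basis of $L/L_0$, contains $\pi(a)$, and yields a nonnegative expansion $\pi(y)=\sum_{v\in B'}t_v v$. I lift $B'$ to $B\subset A_1$ by selecting one preimage in $A_1$ for each $v\in B'$, choosing $a$ itself for $v=\pi(a)$; then $B\in\calB$, and the lifted combination $z:=\sum_{x\in B}t_{\pi(x)}x$ satisfies $\pi(y-z)=0$, so $y=(y-z)+z\in L_0+\cone(B)$.

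The main obstacle is really just the pointedness of $\pi(C)$ together with the lifting step — once these are in place both parts reduce to one-line applications of Lemma~\ref{t:Cara}. The subtlety in the lifting is that $\pi$ may identify distinct elements of $A_1$, so the preimage of a given $v\in B'$ in $A_1$ is not unique; fortunately a basis only needs one representative per coset, so any choice works.
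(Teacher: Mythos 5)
Your proof is correct and follows essentially the same route as the paper's: pass to the quotient $L/L_0$, verify that the image cone is pointed, and invoke Lemma~\ref{t:Cara}(2); your lifting step merely makes explicit the equivalence the paper asserts without detail. The only divergence is in part 1, where the paper argues directly from nonnegative representations of $y$ and $-y$ over $A$ rather than via pointedness of $\pi(C)$, but both arguments are sound.
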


\begin{proof}
1. If $x\in A_0$ then $\pm x\in C$, hence $A_0\subset L_0$ and $\lin(A_0)\subset L_0$, because $L_0$ is a linear subspace. Conversely, let $y\in L_0$ and
\begin{equation*}
  y=\sum_{x\in A}s_xx,\quad -y=\sum_{x\in A}t_xx
\end{equation*}
with some $s_x,t_x\ge 0$. Then
\begin{equation*}
  0=\sum_{x\in A}(s_x+t_x)x.
\end{equation*}
If $s_z>0$ for some $z$ then
\begin{equation*}
  -z=\sum_{A\ni x\ne z}\frac{s_x+t_x}{s_z+t_z}x\in C.
\end{equation*}
and therefore $z\in A_0$. This means that $y=\sum_{x\in A_0}s_xx\in\cone(A_0)$. Hence
\begin{equation*}
  L_0\subset\cone(A_0)\subset\lin(A_0)\subset L_0.
\end{equation*}

2. Denote $\hat L=L/L_0$. It is a linear space with elements $\hat x=x+L_0$, $x\in L$. Clearly, $\hat x=0$ if and only if $x\in L_0$. For example, $\hat x=0$ for all $x\in A_0$, while $\hat a\ne 0$. For $B\subset L$ set $\hat B=\{\hat x\mid x\in B\}$. Then $\hat C=\cone(\hat A)=\cone(\hat A_1)$ and $\hat L=\lin(\hat A_1)=\lin(\hat C)$.

If $\pm\hat x\in \hat C$ then $\pm x\in C+L_0\subset C$, therefore $x\in L_0$ and $\hat x=0$. It means that the lineality space of $\hat C$ is trivial and the cone $\hat C$ is pointed. Then, by Lemma~\ref{t:Cara},
\begin{equation*}
  \hat C=\bigcup_{a\in B\in\calB}\cone(\hat B),
\end{equation*}
which is equivalent to \refeq{CLB}.
\end{proof}

The set $\calB$ mentioned in Proposition~\ref{t:linealas} is not so mysterious as it looks. If we start from any basis $(a_1,\dots,a_p)$ of $L_0$, extend it to a basis $(a_1,\dots,a_k)$ of $L$ and denote $B=\{a_{p+1},\dots,a_k\}$ then $\hat B$ is the basis of $\hat L$. Therefore $B\in\calB$, provided $B\subset A_1$. Such sets $B$ exist and any $B\in\calB$ can be obtained in this way.

\paragraph{Semigroups.}

Recall from the Introduction that by a \emph{vector semigroup} we call any nonempty $S\subset\Zd^m$, such that $x+y\in S$ for all $x,y\in S$. If such a semigroup contains 0, we call it a \emph{vector monoid}. Obviously, $S\cup\{0\}$ is a vector monoid for any vector semigroup $S$. We call a vector semigroup \emph{$k$-dimensional} if its linear hull is a $k$-dimensional linear space.

For $A\subset\Zd^m$, we denote by $\gp(A)$ and $\sg(A)$, respectively, the least group and the least semigroup $\supset A$. If $A=\{a_1,\dots,a_k\}$ is a finite set then
\begin{equation*}
  \gp(A)=\Zd a_1+\cdots+\Zd a_k\quad\text{and}\quad \sg(A)=\Zd_+ a_1+\cdots+\Zd_+ a_k.
\end{equation*}
Hence these sets are discrete analogues of $\lin(A)$ and $\cone(A)$. Finitely generated vector monoids are called \emph{affine monoids} in \citet{Bruns-2009}.

Two groups are associated with every vector monoid $S$: $G=S-S$ coincides with $\gp(S)$, and $G_0=S\cap(-S)$ is the greatest group contained in $S$. Clearly, $\lin(S)=\lin(G)$.

\begin{proposition}\label{t:sg}
If $A\subset\Zd^m$ is a finite set then there exists a $d\in\Nd$, such that $dx\in\sg(A)$ for all $x\in\cone(A)\cap\Zd^m$.
\end{proposition}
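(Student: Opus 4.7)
My plan is to decompose $C=\cone(A)$ using Proposition~\ref{t:linealas} and reduce everything to the determinant argument from the Introduction, which bounds denominators of the coordinates of an integer vector in a fixed integer basis by a number depending only on the basis; this lets me control finitely many bases uniformly.

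The first step is to handle the lineality space. Set $L_0=C\cap(-C)$, $A_0=\{a\in A\mid -a\in C\}$ and $A_1=A\be A_0$. By Proposition~\ref{t:linealas}.1, $L_0=\cone(A_0)=\lin(A_0)$. Fix $a\in A_0$: then $-a\in\cone(A_0)$, and by Carath\'eodory's theorem for cones I obtain a linearly independent $B_a\subset A_0$ with $-a=\sum_{b\in B_a}s_bb$, $s_b\ge 0$. The determinant argument applied to the basis $B_a$ of $\lin(B_a)$ gives $d_a\in\Nd$ with $s_b\in d_a^{-1}\Zd$ for all $b$, so $-d_aa\in\sg(B_a)\subset\sg(A_0)$. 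Letting $d_1$ be the lcm of these finitely many $d_a$, I get $-d_1a\in\sg(A_0)$ for every $a\in A_0$. Splitting an integer combination $y=\sum_{a\in A_0}n_aa$ into its positive and negative parts, this yields $d_1y\in\sg(A_0)$ whenever $y\in\gp(A_0)$.

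In the second step I combine this with Proposition~\ref{t:linealas}.2. Assume $A_1\ne\0$ and fix any $a\in A_1$; then $C=L_0+\bigcup_{B\in\calB}\cone(B)$ for a finite family $\calB$ of bases $B\subset A_1$ of $L/L_0$, where $L=\lin(A)$. Choose a basis $A_0'\subset A_0$ of $L_0$; then $A_0'\cup B$ is a basis of $L$ for each $B\in\calB$. Given $x\in C\cap\Zd^m$, pick $B\in\calB$ with $x\in L_0+\cone(B)$ and write $x=y_0+z_0$ accordingly; expanding $y_0$ in $A_0'$ and $z_0$ in $B$ gives the basis expansion $x=\sum_{a'\in A_0'}t_{a'}a'+\sum_{b\in B}s_bb$ with $s_b\ge 0$. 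The determinant argument yields $d_B\in\Nd$ (depending only on the basis $A_0'\cup B$) with $d_Bt_{a'},d_Bs_b\in\Zd$. Then $d_Bx=y+z$ with $y=\sum(d_Bt_{a'})a'\in\gp(A_0)$ and $z=\sum(d_Bs_b)b\in\sg(B)$, so the first step gives $d_1d_Bx\in\sg(A_0)+\sg(B)\subset\sg(A)$. If instead $A_1=\0$, then $C=L_0$ and the determinant argument on the basis $A_0'$ of $L_0$ alone gives $d'\in\Nd$ with $d'x\in\gp(A_0)$, so $d_1d'x\in\sg(A)$. Taking $d$ to be the lcm over the finitely many relevant natural numbers finishes the argument.

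The main obstacle is that $C$ need not be pointed, so Lemma~\ref{t:Cara}.2 cannot be invoked for $A$ directly. The work is concentrated in the lineality step, where I must show that each $-a$ with $a\in A_0$ becomes an element of $\sg(A_0)$ after a uniformly bounded multiplication; once this is in hand, Proposition~\ref{t:linealas}.2 splits the general case into finitely many cases, each reduced to the determinant bound on a single basis of $L$.
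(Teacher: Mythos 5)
Your proof is correct, but it takes a more roundabout route than necessary. The obstacle you identify --- that $C$ need not be pointed, so Lemma~\ref{t:Cara}.2 does not apply to $A$ --- is not actually an obstacle for the paper's argument, which never uses Lemma~\ref{t:Cara} here. It uses the Carath\'eodory theorem for cones (Theorem~1.55 of Bruns--Gubeladze), which holds for arbitrary finitely generated cones, pointed or not: every $x\in\cone(A)$ already lies in $\cone(B)$ for some linearly independent $B\subset A$. Combined with the determinant argument from the Introduction (giving $d_Bx\in\sg(B)$ for $x\in\cone(B)\cap\Zd^m$) and taking $d=\prod_Bd_B$ over all linearly independent $B\subset A$, this finishes the proof in two short steps, with no reference to the lineality space. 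Notably, you yourself invoke exactly this unrestricted conical Carath\'eodory theorem in your first step, for $-a\in\cone(A_0)=L_0$, which is a linear subspace and hence as far from pointed as possible --- so the same tool applied directly to $\cone(A)$ would have collapsed your argument to the paper's. What your decomposition via Proposition~\ref{t:linealas} buys is a sharper structural conclusion (a uniform $d$ with $d\cdot\gp(A_0)\subset\sg(A_0)$ and $dx\in\sg(A_0)+\sg(B)$ for a single adapted basis $A_0'\cup B$), which anticipates the shape of Proposition~\ref{t:d}; but for the statement as posed it is extra machinery. All the individual steps you give (the splitting $x=y_0+z_0$ is unique because $L=L_0\oplus\lin(B)$, the coordinates $s_b$ of $z_0$ are nonnegative, the lcm bookkeeping) do check out.
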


\begin{proof}
\emph{Step~1:} the case, where $A$ is linearly independent.

Let $A=\{a_1,\dots,a_k\}$ and $L=\lin(A)$; then $(a_1,\dots,a_k)$ is a basis of $L$. Let $x^i$ denote the coordinates of a vector $x\in L$ in that basis. If $a_{ij}$ denotes the $j$th component of $a_i$ then $x^i=d_i/d$, where $d$ and $d_i$ are given by \refeq{d}--\refeq{dj} (and $j_1<\cdots<j_k$ are chosen so that $d\ne 0$). Clearly, $\abs{d}\in\Nd$. If $x\in\Zd^m$, then all $\abs{d}x^i=\pm d_i$ are integers as well.

If $x\in\cone(A)$ then $x=s_1a_1+\cdots+s_ka_k$ for some $s_i\ge0$. Clearly, $s_i$ coincides with $x^i$, therefore $x^i\ge 0$. Hence if $x\in\cone(A)\cap\Zd^m$ then $\abs{d}x^i\in\Zd_+$ for all $i$, and $\abs{d}x\in\sg(A)$.

\emph{Step~2:} the general case.

Let $\calB$ denote the set of all linearly independent subsets of $A$. By the result of Step~1, for each $B\in\calB$ there exists a $d_B\in\Nd$, such that $d_Bx\in\sg(B)$ for all $x\in\cone(B)\cap\Zd^m$. Set $d=\prod_{B\in\calB}d_B$. If $x\in\cone(A)\cap\Zd^m$, then it follows from the Carath\'eodory theorem \citep[Theorem~1.55]{Bruns-2009} that $x\in\cone(B)$ for some $B\in\calB$. Then $d_Bx\in\sg(B)\subset\sg(A)$ and a fortiori $dx\in\sg(A)$.
\end{proof}

To move further, we need another simple lemma. Let $T$ be a partially ordered set and $T_0\subset T$. We say that $T_0$ is a \emph{minorant} of $T$ if for all $x\in T$ there exists an $x_0\in T_0$ with $x_0\le x$. We wonder if $T$ admits a finite minorant. If, for some $x\in T$, the set $\{y\in T\mid y\le x\}$ is finite then it contains a minimal (in $T$) element $x_0$, which obviously minorizes $x$. Therefore if all sets $\{y\in T\mid y\le x\}$ are finite then the set $T_{\min}$ of all minimal elements is a minorant of $T$, and it suffices to find out if it is finite.

We are only interested in the case, where $T\subset\Zd_+^I$, where $I$ is a finite set. The elements of $T$ are families $x=(x_i\mid i\in I)$ of nonnegative integers. If $y=(y_i\mid i\in I)$ is another element of $T$ then $x\le y$ means that $x_i\le y_i$ for all $i\in I$. Clearly, all sets $\{y\mid y\le x\}$, $x\in T$, are finite in this case.

\begin{lemma}\label{t:minoravimas}
For any $T\subset\Zd_+^I$, the set $T_{\min}$ is a finite minorant of $T$.
\end{lemma}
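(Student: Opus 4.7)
The plan is to prove this by the classical Dickson's lemma argument. As noted in the paragraph preceding the lemma, since every set $\{y\in T\mid y\le x\}$ is finite, any $x\in T$ can be minorized by choosing a minimal element below it. Hence $T_{\min}$ is automatically a minorant of $T$, and the entire content of the lemma reduces to proving that $T_{\min}$ is \emph{finite}.

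For the finiteness I would argue by contradiction. Suppose $T_{\min}$ contains an infinite (necessarily injective) sequence $x^{(1)},x^{(2)},\ldots$ of pairwise distinct elements. The key observation is that every sequence of nonnegative integers admits a nondecreasing subsequence: if the sequence is bounded, some value is attained infinitely often; if it is unbounded, one extracts a strictly increasing subsequence by induction. Enumerate $I=\{i_1,\ldots,i_n\}$ and apply this observation successively to the coordinates: first extract a subsequence on which the $i_1$-th coordinate is nondecreasing, then from it a subsequence on which the $i_2$-th coordinate is nondecreasing, and so on. After $n$ steps we obtain an infinite subsequence $(x^{(k_j)})_{j\ge 1}$ whose every coordinate is nondecreasing in $j$, i.e.\ $x^{(k_j)}\le x^{(k_{j+1})}$ in the product order.

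This contradicts the minimality of $x^{(k_2)}$: indeed $x^{(k_1)}\le x^{(k_2)}$ and $x^{(k_1)}\ne x^{(k_2)}$, so $x^{(k_2)}$ strictly dominates another element of $T$, which is impossible for a minimal element. Hence $T_{\min}$ must be finite.

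The only non-routine step is the iterative coordinate-by-coordinate extraction of a nondecreasing subsequence, but this is a standard finite induction using the well-ordering of $\Zd_+$; everything else is bookkeeping. No earlier result from the excerpt is needed.
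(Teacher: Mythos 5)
Your proof is correct, but it takes a genuinely different route from the paper's. You prove that $\Zd_+^I$ is a well-quasi-order by the classical Dickson's lemma argument: from any infinite sequence in $T_{\min}$, successively extract subsequences to make each of the $|I|$ coordinates nondecreasing, obtaining two distinct comparable elements of $T_{\min}$, a contradiction. The paper instead does induction on $|I|$ directly: it fixes $a\in T$, notes that any infinite subset of $T_{\min}$ must have infinitely many elements with some coordinate $x_j<a_j$, then by pigeonhole infinitely many with $x_j=s$ for a fixed $s<a_j$; freezing that coordinate drops the problem into $\Zd_+^{I\setminus\{j\}}$, contradicting the inductive hypothesis. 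Both arguments are standard and sound. Your version isolates a reusable and widely known fact (every infinite sequence in $\Zd_+^n$ contains an infinite chain) and does not need to fix a reference element, whereas the paper's pigeonhole induction is perhaps shorter once set up and avoids any appeal to subsequence extraction. One small point worth stating explicitly in your writeup: a minimal element of $\{y\in T\mid y\le x\}$ is automatically minimal in $T$ (since any $z\in T$ with $z<x_0\le x$ would lie in that finite set), which is what justifies the opening claim that $T_{\min}$ minorizes $T$; you gesture at this by citing the paper's preamble, and that is fine.
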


\begin{proof}
We use induction on the number of elements in $I$. If $I=\0$ then $T$ contains only one element, the empty family. Hence $T_{\min}=T$ is finite. Now consider the case, where $I$ is not empty.

Suppose the set $T_{\min}$ is infinite and fix any $a\in T$.. The set $\{x\in T\mid a\le x\}$ can contain only one element from $T_{\min}$, the $a$. Therefore one of the sets $\{x\in T_{\min}\mid x_j<a_j\}$, $j\in I$, is infinite. There are only finitely many integers between 0 and $a_j$, therefore one of the sets $T'=\{x\in T_{\min}\mid x_j=s\}$, $s\in\Zd_+$, is infinite. It is easily checked that $T'_{\min}=T'$. But the partially ordered set $T'$ is isomorphic to the set $T''=\{(x_i\mid i\ne j)\mid x\in T'\}\subset\Zd_+^{I\be\{j\}}$, because, for all $x,y\in T'$,
\begin{equation*}
  x\le y\iff (x_i\mid i\ne j)\le (y_i\mid i\ne j).
\end{equation*}
Hence $T'_{\min}$ is finite by induction. We got a contradiction.
\end{proof}

Now we can prove the structural theorem for vector semigroups. It is an analogue of Proposition~\ref{t:linealas}, and also some generalization of Gordon's lemma \citep[Lemma~2.9]{Bruns-2009}, well known in convex analysis.

\begin{proposition}\label{t:d}
Let $S$ be a vector monoid, $A\subset S$ its finite subset, $C=\cone(A)$, $S_C=S\cap C$, $G_0=S_C\cap(-S_C)$, $G=\gp(S_C)$, $L_0=C\cap(-C)$ and $L=\lin(C)$. Denote
\begin{equation*}
  A_0=\{x\in A\mid -x\in S_C\}\quad\text{and}\quad A_1=A\be A_0.
\end{equation*}
Then: 1)
\begin{gather}
  A_0=\{x\in A\mid -x\in C\},\quad L_0=\lin(A_0),\quad L=\lin(A),\label{e:A0}\\
  \sg(A_0)=\gp(A_0)\subset G_0,\quad \gp(A)\subset G;\label{e:sA0}
\end{gather}

2) if $A_1=\0$ then there exists a finite $T\subset S_C$, such that
\begin{equation*}
  S_C=G_0=G=T+\gp(A);
\end{equation*}

3) if $a\in A_1$ then
\begin{equation*}
  S_C= T+\gp(A_0)+\bigcup_{a\in B\in\calB}\sg(B),
\end{equation*}
where $T$ is some finite subset of $S_C$ and $\calB$ is the set of all $B\subset A_1$, such that the family $(x+L_0\mid x\in B)$ is a basis of the quotient space $L/L_0$.
\end{proposition}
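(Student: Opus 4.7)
The plan is to handle the three parts separately; part~3 carries all the technical content.

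Parts~1 and~2 will follow from Propositions~\ref{t:linealas} and~\ref{t:sg}. For part~1, the key step is $A_0 = \{x \in A \mid -x \in C\}$: if $-x \in C$, Proposition~\ref{t:sg} applied to $A$ gives $d \in \Nd$ with $d(-x) \in \sg(A) \subset S$, and then $-x = -dx + (d-1)x \in S$ because $x \in A \subset S$ and $S$ is closed under addition. The identities $L_0 = \lin(A_0)$ and $L = \lin(A)$ are then immediate from Proposition~\ref{t:linealas}. For $\sg(A_0) = \gp(A_0)$, apply Proposition~\ref{t:sg} to $A_0$ (whose cone is $L_0$ by Proposition~\ref{t:linealas}) to get a common $d$ with $d(-a) \in \sg(A_0)$ for every $a \in A_0$; the same substitution then expresses each $-a$, and hence every $\Zd$-combination of $A_0$, as a $\Zd_+$-combination. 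The inclusions $\gp(A_0) \subset G_0$ and $\gp(A) \subset G$ are formal. For part~2, $A_1 = \0$ forces $C = L_0$, so $S_C = S \cap L_0$; the same trick yields $-x \in S_C$ for each $x \in S_C$, whence $S_C$ is a group and $S_C = G_0 = G$. This $S_C$ is a finitely generated abelian subgroup of $L \cap \Zd^m$ of rank $\dim L$, and $\gp(A) \subset S_C$ has the same rank, so the finite quotient $S_C/\gp(A)$ supplies the required $T$.

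For part~3, I apply Proposition~\ref{t:linealas} to decompose $C = L_0 + \bigcup_{a \in B \in \calB} \cone(B)$, reducing the claim to proving, for each $B \in \calB$ with $a \in B$, that $S_B := S \cap (L_0 + \cone(B))$ satisfies $S_B \subset T_B + \gp(A_0) + \sg(B)$ for some finite $T_B \subset S_C$; then $T := \bigcup_B T_B$ works. Since the family $(b + L_0)_{b \in B}$ is a basis of $\hat L := L/L_0$, every $x \in S_B$ has a unique decomposition $x = l + \sum_{b \in B} c_b(x)\, b$ with $l \in L_0$ and $c_b(x) \ge 0$; the coordinate remark following~\refeq{dj} supplies a fixed $d \in \Nd$ with $c_b(x) \in d^{-1}\Zd$. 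Setting $n_b(x) := \floor{c_b(x)} \in \Zd_+$ and $r_b(x) := c_b(x) - n_b(x) \in d^{-1}\Zd \cap [0,1)$ yields the splitting $x = y(x) + m(x)$, where $y(x) := l + \sum r_b(x)\, b \in G$ and $m(x) := \sum n_b(x)\, b \in \sg(B)$. Define the \emph{type} of $x$ by
\begin{equation*}
  \tau(x) := \bigl((r_b(x))_{b \in B},\, y(x) + \gp(A_0)\bigr) \in \bigl(d^{-1}\Zd \cap [0,1)\bigr)^B \times G/\gp(A_0).
\end{equation*}
Only finitely many types occur: the first coordinate takes at most $d^{\abs B}$ values, and for each fixed $(r_b)$ the element $y(x)$ lies in a single coset of $G \cap L_0$ in $G$, which meets only finitely many $\gp(A_0)$-cosets because $(G \cap L_0)/\gp(A_0)$ is a finite group---both subgroups of $L_0 \cap \Zd^m$ having the common rank $\dim L_0$.

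For each realized type $\tau$, let $N_\tau \subset \Zd_+^B$ be the set of vectors $(n_b(x))_{b \in B}$ arising from $x \in S_B$ of type $\tau$; by Lemma~\ref{t:minoravimas}, $(N_\tau)_{\min}$ is a finite minorant of $N_\tau$. For each $\vec\nu \in (N_\tau)_{\min}$ choose a representative $x_{\tau,\vec\nu} \in S_B$ of type $\tau$ with $n$-vector $\vec\nu$, and let $T_B$ be the (finite) collection of all such representatives. Given an arbitrary $x \in S_B$ of type $\tau$, pick $\vec\nu \in (N_\tau)_{\min}$ with $\vec\nu \le (n_b(x))_b$ componentwise; then
\begin{equation*}
  x - x_{\tau,\vec\nu} = \bigl(y(x) - y(x_{\tau,\vec\nu})\bigr) + \tsum_{b \in B} \bigl(n_b(x) - \nu_b\bigr)\, b \in \gp(A_0) + \sg(B),
\end{equation*}
the first summand lying in $\gp(A_0)$ because $x$ and $x_{\tau,\vec\nu}$ share the same $\gp(A_0)$-coset of $y$, and the second in $\sg(B)$ by the choice of $\vec\nu$. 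I expect the main obstacle to be pinning down the correct notion of type: it must remember the full $\gp(A_0)$-coset of $y(x)$ rather than merely its $\hat L$-image, and the resulting finiteness hinges on the non-obvious observation that $(G \cap L_0)/\gp(A_0)$ is finite.
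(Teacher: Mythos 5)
Your argument is correct, and its skeleton coincides with the paper's: part~1 via Proposition~\ref{t:sg} and the substitution $-x=-dx+(d-1)x$; part~3 by reducing through Proposition~\ref{t:linealas} to a single $B\ni a$, splitting each $x$ into an integer $\sg(B)$-part plus a controlled remainder, and invoking Lemma~\ref{t:minoravimas} on the resulting $\Zd_+^B$-vectors. The two points where you diverge are both sound and arguably cleaner. In part~2 the paper first proves $S_C\subset T+\gp(A)$ by a Gordon's-lemma truncation and only then deduces that $S_C$ is a group by a pigeonhole argument on the multiples $nz$; you instead get groupness directly from Proposition~\ref{t:sg} (since $C=L_0$ is a linear space) and then obtain $T$ from the finiteness of $S_C/\gp(A)$, which follows from the equality of ranks. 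In part~3 the paper's bookkeeping is via the finite set $R(B)$ of exact fractional remainders $z=\sum_i(s_i-\floor{s_i})a_i$, which is finite simply because it is a bounded subset of the discrete group $G$; your coarser notion of type $\bigl((r_b)_b,\,y(x)+\gp(A_0)\bigr)$ requires the extra (correct) observation that $(G\cap L_0)/\gp(A_0)$ is finite, whereas the paper sidesteps this by stripping off the $\gp(A_0)$-component $\sum_{i\le p}\floor{s_i}a_i$ before applying the minorant lemma. Since $z$ determines your type (one checks $y(x)+\gp(A_0)=z+\gp(A_0)$), the two groupings are compatible; yours buys nothing essential but also loses nothing, and both correctly reduce the problem to Lemma~\ref{t:minoravimas}.
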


\begin{proof}
1. If $x\in A_0$ then $x\in A$ and $-x\in S_C\subset C$. Conversely, if $x\in A$ and $-x\in C$ then, by Proposition~\ref{t:sg}, $-dx\in\sg(A)$ with some $d\in\Nd$. Then
\begin{equation*}
  -x=-dx+(d-1)x\in S_C+S_C\subset S_C
\end{equation*}
and therefore $x\in A_0$. Hence the first equality in \refeq{A0} holds true. The second one then follows from Proposition~\ref{t:linealas}, and the third is obvious.

Clearly, $\sg(A_0)\subset\gp(A_0)$, let us prove the converse relation. If $x\in A_0$ then $-x\in S\subset\Zd^m$ and $-x\in L_0=\cone(A_0)$ (by \refeq{A0} and Proposition~\ref{t:linealas}). Proposition~\ref{t:sg} then yields $-dx\in\sg(A_0)$ with some $d\in\Nd$. Thus $-x=-dx+(d-1)x\in\sg(A_0)$, i.e.\ $-A_0\subset\sg(A_0)$. It yields
\begin{equation*}
  \gp(A_0)=\sg(A_0)-\sg(A_0)=\sg(A_0)+\sg(-A_0)\subset\sg(A_0)+\sg(A_0)\subset\sg(A_0).
\end{equation*}

We proved that $\sg(A_0)=\gp(A_0)$. Hence this set is a group which is contained in $S_C$. Therefore $\sg(A_0)\subset G_0$. The last relation in \refeq{sA0} is obvious.

2. We follow the lines of the proof of Gordon's lemma. Let $A=A_0=\{a_1,\dots,a_k\}$. If $x\in S_C$ then $x=s_1a_1+\cdots+s_ka_k$ with some $s_1,\dots,s_k\in\Rd$. Thus
\begin{equation}\label{e:xaz}
  x=\floor{s_1}a_1+\cdots+\floor{s_k}a_k+z
\end{equation}
with a $z$ from some bounded, and hence finite, set $R\subset G$. So $S_C\subset \gp(A)+R$.

Let $T$ be a finite set, which intersects each nonempty $S_C\cap(\gp(A)+z)$, $z\in R$. If $x\in S_C$ then $x=y+z$ with some $y\in\gp(A)$ and $z\in R$. The intersection $S_C\cap(\gp(A)+z)$ is nonempty, therefore it contains some $x_0\in T$. Let $x_0=y_0+z$ with $y_0\in\gp(A)$, then $x-x_0=y-y_0\in\gp(A)$ and $x\in T+\gp(A)$. Hence $S_C\subset T+\gp(A)$. The converse relation also holds true, because \refeq{sA0} implies $\gp(A)=\gp(A_0)\subset G_0\subset S_C$. Hence $S_C=T+\gp(A)$.

It remains to prove that $S_C$ is a group: then it will coincide both with $G_0$ and $G$. Fix an arbitrary $z\in T$, then $z\in S_C$ and $nz\in S_C$ for all $n\in\Nd$. Let $nz=y_n+z_n$ with $y_n\in\gp(A)$ and $z_n\in T$. Since $T$ is finite, there exist $n_1<n_2$, such that $z_{n_1}=z_{n_2}$. Then for $n=n_2-n_1\ge 1$ we get
\begin{equation*}
  nz=n_2z-n_1z=y_{n_2}+z_{n_2}-y_{n_1}-z_{n_1}=y_{n_2}-y_{n_1}\in\gp(A).
\end{equation*}
Hence $-z=(n-1)z-nz\in(n-1)z+\gp(A)\subset S_C$.

We have proved that $-T\subset S_C$. Then $-S_C=-T-\gp(A)\subset S_C$ and therefore $S_c$ is a group.

3. For short, denote $\calB_a=\{B\in\calB\mid a\in B\}$. If $x\in S_C$ then $x\in C$ and it follows from Proposition~\ref{t:linealas} that $x\in L_0+\cone(B)$ with some $B\in\calB_a$. Let $(a_1,\dots,a_p)\subset A_0$ be a basis of $L_0$ and $B=\{a_{p+1},\dots,a_k\}$. Then $x=s_1a_1+\cdots+s_ka_k$ with some $s_i\in\Rd$; moreover, $s_i\ge 0$ for $i>p$. Again, \refeq{xaz} holds with a $z$ from some bounded, and hence finite, set $R(B)\subset G$. So $x\in\gp(A_0)+\sg(B)+R(B)$.

For $B\in\calB_a$ and $z\in R(B)$ set $S(B,z)=S_C\cap(\sg(B)+z)$. If $B=\{b_1,\dots,b_q\}$ then each $x\in S(B,z)$ has a unique representation of the form $x=z+x^1b_1+\cdots+x^qb_q$, where $x^i\in\Zd_+$. For $x,y\in S(B,z)$ let us write $x\le y$ if $x^i\le y^i$ for all $i$. Then $\le$ is a partial order on $S(B,z)$, and that partially ordered set is isomorphic to $\Zd_+^q$. It follows from Lemma~\ref{t:minoravimas} that there exists a finite minorant $T(B,z)$ of $S(B,z)$. Clearly, $T(B,z)\subset S_C$ and $S(B,z)\subset T(B,z)+\sg(B)$.

Denote $T=\bigcup_{B\in\calB_a, z\in R(B)}T(B,z)$; then $T$ is a finite subset of $S_C$. If $x\in S_C$ then, for some $y\in\gp(A_0)$, $B\in\calB_a$ and $z\in R(B)$,
\begin{equation*}
  x-y\in S(B,z)\subset T(B,z)+\sg(B)\subset T+\sg(B).
\end{equation*}
Hence $S_C\subset T+\gp(A_0)+\bigcup_{B\in\calB_a}\sg(B)$. The converse relation is obvious.
\end{proof}

The following example shows that relations $\subset$ in \refeq{sA0} cannot be replaced by equalities.

\begin{example}
Let $S=\Zd^2$ and $A$ consists of 4 vectors $(\pm1,\pm1)$. Then $C=\Rd^2$, $S_C=S=\Zd^2$ and therefore $G=G_0=\Zd^2$. On the other hand, $A_0=A$ and
\begin{equation*}
  \sg(A_0)=\gp(A_0)=\{(x_1,x_2)\in\Zd^2\mid x_1\equiv x_2\pmod*{2}\}.
\end{equation*}
\end{example}

\paragraph{The asymptotic cone of a semigroup.}

Let $S$ be a vector semigroup and $L=\lin(S)$. Denote
\begin{equation*}
  S^*=\bigcup_{n\ge 1}S/n.
\end{equation*}
Clearly, $S^*$ is a semigroup too, although not a vector semigroup, because $S^*\not\subset\Zd^m$. However, $S^*\subset\Qd^m$ and $sx\in S^*$ for all $x\in S^*$ and positive $s\in\Qd$. Obviously, $S\subset S^*\subset L$ and therefore $L=\lin(S^*)$.

The set $C=\cone(S)$ is a convex cone, therefore $\aff(C)=\lin(C)=\lin(S)=L$. It is well-known that then $O=\rint(C)$ is also a convex cone, moreover $\lin(O)=L$ \citep[Theorem~6.2]{Rockafellar-72}. We call $O$ the \emph{asymptotic cone} of $S$. If $x\in O$, then $x=s_1x_1+\cdots+s_kx_k$ with some $x_1,\dots,x_k\in S$ and $s_1,\dots,s_k\in\Rd_+$. If $t_i$ are rational numbers close enough to $s_i$ then the vector $y=t_1x_1+\cdots+t_kx_k\in S^*$ is arbitrary close to $x$. Hence $O\subset\clos{S^*}$.

Our next goal is to show that every point $x\in O$ lies in the relative interior of some full-dimensional simplex with vertices in $S^*$, and derive some corollaries from that. To this end, we introduce some more notation. If $P$ is a simplex and $\aff(P)=\lin(P)=L$ we call it an \emph{$L$-simplex}. An $L$-simplex is called an \emph{$S$-simplex} if all its vertices belong to $S^*$.

Let $P$ be an $L$-simplex with vertices $a_0,\dots,a_k$, then $a_i\in L$ for all $i$ and $(a_1-a_0,\dots,a_k-a_0)$ is a basis of $L$. Since $P-a_0$ is the image of the set $\{(s_1,\dots,s_k)\in\Rd_+^k\mid s_1+\cdots+s_k\le 1\}$ by the homeomorphism $(s_1,\dots,s_k)\mapsto \sum_{i=1}^ks_i(a_i-a_0)$, we get
\begin{equation*}
  \rint(P)=\{\tsum_{i=0}^ks_ia_i\mid s_0,\dots,s_k>0,\ \tsum_{i=0}^ks_i=1\}
\end{equation*}
If $x\in\rint(P)$ then $x-a_0=\tsum_{i=1}^ks_i(a_i-a_0)$ with some $s_i>0$, such that $\sum_{i=1}^ks_i<1$.

Now let $b_i\in L$ and $\abs{b_i-a_i}<\delta$ for $i=0,\dots,k$. If $\delta$ is small enough then $(b_1-b_0,\dots,b_k-b_0)$ is another basis of $L$. Moreover, the coordinates of $x-b_0$ in basis $(b_1-b_0,\dots,b_k-b_0)$ are close to $s_1,\dots,s_k$ and therefore they are positive and their sum is less than 1. Hence if $\delta$ is small enough then $Q=\conv(b_0,\dots,b_k)$ is another $L$-simplex and $x\in\rint(Q)$.

Consequently, the following statement holds true: if $P$ is an $L$-simplex with vertices in $O$ and $x\in\rint(P)$ then there exists an $S$-simplex $Q$ with $x\in\rint(Q)$.

\begin{proposition}\label{t:simpleksas}
Let $S$ be a vector semigroup and $O$ its asymptotic cone. Then:

1) each $x\in O$ lies in the relative interior of some $S$-simplex,

2) if $0\in O$ then $S$ is a group,

3) $O\cap\Qd^m\subset S^*$.
\end{proposition}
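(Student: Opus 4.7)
The plan is to prove part 1 directly by a perturbation argument in $O$ and then derive parts 2 and 3 from it. For part 1, I would use the openness of $O = \rint(C)$ in $L = \lin(C)$: fixing a basis $e_1,\dots,e_k$ of $L$ (where $k = \dim L$), set $a_0 = x - \delta(e_1 + \cdots + e_k)$ and $a_i = x + \delta e_i$ for $i = 1,\dots,k$. Then $x$ is the barycenter of $a_0,\dots,a_k$ with positive weights $1/(k+1)$; for $\delta > 0$ sufficiently small all $a_i$ lie in $O$ by openness; and a short linear-algebra check (the $k\times k$ matrix with diagonal $2$ and off-diagonal $1$ being invertible) shows they are affinely independent. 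Hence $P = \conv(a_0,\dots,a_k)$ is an $L$-simplex with vertices in $O$ and $x \in \rint(P)$, and the paragraph immediately preceding the proposition upgrades $P$ to an $S$-simplex $Q$ with $x \in \rint(Q)$.

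For part 2, the assumption $0 \in O$ forces $\cone(S)$ to contain a neighborhood of $0$ in $L$, and stability of $\cone(S)$ under positive scaling then gives $\cone(S) = L$. Fix any $x \in S$: then $-x \in L = \cone(S)$ is a nonnegative real combination of finitely many elements of $S$, and Proposition~\ref{t:sg} applied to that finite set furnishes $d \in \Nd$ with $-dx \in S$. If $d = 1$ we conclude immediately; otherwise $-x = (-dx) + (d-1)x$ exhibits $-x$ as a sum of two elements of $S$. Either way $-x \in S$, so $S = -S$, which combined with closure under addition yields $0 \in S$; hence $S$ is a group.

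For part 3, I would take $x \in O \cap \Qd^m$ and invoke part 1 to inscribe $x$ in the relative interior of an $S$-simplex $Q = \conv(a_0,\dots,a_k)$ with vertices in $S^*$. The barycentric coordinates $s_i > 0$, $\sum s_i = 1$ of $x$ in $Q$ solve a linear system with rational coefficients (both $x$ and the $a_i$ have rational components), so the determinant formulas recalled in the Introduction force $s_i \in \Qd_+$. Writing each $a_i = b_i / m_i$ with $b_i \in S$, $m_i \in \Nd$ and choosing a common denominator $N$ of the positive rationals $s_i / m_i$, we obtain $p_i := N s_i / m_i \in \Nd$ with $Nx = \sum p_i b_i \in S$ (a sum of elements of $S$), whence $x \in S/N \subset S^*$.

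The only delicate step is the perturbation in part 1: one must choose $\delta$ small enough to keep all vertices in the open set $O$ while preserving affine independence. Both requirements are handled by the continuity observations already recorded in the Introduction, so this should go through cleanly, and parts 2 and 3 are then essentially formal consequences.
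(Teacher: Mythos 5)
Your proposal is correct. Parts 1 and 3 follow essentially the same route as the paper: the perturbed-barycenter construction $a_0 = x - \delta\sum e_i$, $a_i = x + \delta e_i$ with the same affine-independence check (the paper verifies it by hand, you observe the transition matrix is $\delta(I+J)$ with $J$ the all-ones matrix), and then the rationality-of-barycentric-coordinates argument for part 3, which you simply spell out with the common denominator $N$ whereas the paper concludes $x\in S^*$ more tersely.

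Part 2 is where you take a genuinely different path. The paper invokes part 1 to produce an $S$-simplex $P$ with $0 \in \rint(P)$, multiplies to bring the vertices into $S$, shows the relation $n_0 a_0 + \cdots + n_k a_k = 0$ with $n_i \in \Nd$, deduces that $\cone(a_0,\dots,a_k)$ is all of $L$, and then appeals to the structure result Proposition~\ref{t:d} to conclude that $S$ is a group. (Strictly, Proposition~\ref{t:d} is stated for monoids, so the paper's argument implicitly passes through $S\cup\{0\}$ and then recovers $0\in S$; this is routine but unspoken.) You instead bypass the simplex entirely: $0\in O$ gives $\cone(S)=L$ by positive homogeneity, Proposition~\ref{t:sg} applied to a finite $A\subset S$ with $-x\in\cone(A)$ yields $-dx\in\sg(A)\subset S$, and the telescoping $-x = -dx + (d-1)x$ lands $-x$ in $S$ directly, from which $0 = x+(-x)\in S$. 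Your route is shorter, independent of part 1 and of Proposition~\ref{t:d}, and settles the $0\in S$ point explicitly; the paper's route has the mild advantage of reusing the simplex machinery already set up and exercising Proposition~\ref{t:d}, but is otherwise no simpler.
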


\begin{proof}
1. Let $x\in O$, $L=\lin(S)$, $(e_1,\dots,e_k)$ be a basis of $L$ and
\begin{equation*}
  a_0=x-\epsilon(e_1+\cdots+e_k),\quad a_i=x+\epsilon e_i\text{ for $i=1,\dots,k$,}
\end{equation*}
where $\epsilon$ is so small that $a_i\in O$ for all $i$. If
\begin{equation*}
  \sum_{i=0}^ks_ia_i=0\quad\text{and}\quad \sum_{i=0}^ks_i=0
\end{equation*}
then
\begin{equation*}
  0=\sum_{i=0}^ks_ix+\epsilon\sum_{i=1}^k(s_i-s_0)e_i=\epsilon\sum_{i=1}^k(s_i-s_0)e_i,
\end{equation*}
which yields $s_i=s_0$ for $i=1,\dots,k$. Then $0=\sum_{i=0}^ks_i=(k+1)s_0$ and therefore $s_i=0$ for all $i$. Hence the family $(a_0,\dots,a_k)$ is affinely independent and $P=\conv(a_0,\dots,a_k)$ is an $L$-simplex. Since
\begin{equation*}
  x=\frac{a_0+\cdots+a_k}{k+1},
\end{equation*}
$x\in\rint(P)$. It remains to apply the statement just before the proposition.

2. Let $0\in O$ and $P$ be an $S$-simplex with vertices $a_0,\dots,a_k$, such that $0\in\rint(P)$. Then $0=s_0a_0+\cdots+s_ka_k$ with some positive $s_i$ whose sum equals 1. The equality remains valid if we multiply it by any natural number, therefore without lost of generality we can assume that $a_i\in S$ for all $i$. Since $(a_1-a_0,\dots,a_k-a_0)$ is a basis of $L$ and
\begin{equation*}
  -a_0=s_1(a_1-a_0)+\cdots+s_k(a_k-a_0),
\end{equation*}
all $s_i$ are rational. Therefore multiplying once again the initial equality by some natural number we get, for some $n_0,\dots,n_k\in\Nd$,
\begin{equation*}
  n_0a_0+\cdots+n_ka_k=0.
\end{equation*}

Denote $C=\cone(a_0,\dots,a_k)$. If $x\in C$ then $x=\sum_{i=0}^ks_ia_i$ with some $s_i\ge 0$. Then, for some natural $t$ large enough,
\begin{equation*}
  -x=t0-x=\sum_{i=0}^k(tn_i-s_i)a_i\in C.
\end{equation*}
Hence $-C\subset C$, i.e.\ $C$ is a linear space --- coincides with $L=\lin(C)$. By Proposition~\ref{t:d}, $S$ is a group.

3. Let $x\in O\cap\Qd^m$ and $P$ be an $S$-simplex with vertices $a_0,\dots,a_k$, such that $x\in\rint(P)$. Suppose $x=s_0a_0+\cdots+s_ka_k$ with positive $s_i$ whose sum equals 1. It follows from
\begin{equation*}
  \Qd^m\ni x-a_0=\sum_{i=1}^ks_i(a_i-a_0)
\end{equation*}
that $s_1,\dots,s_k\in\Qd$. Clearly, then also $s_0=1-\sum_{i=1}^ks_i$ is rational. Hence $x\in S^*$.
\end{proof}

\paragraph{Homomorphisms.}

Let $S$ and $T$ be two vector semigroups and $\pi\maps S\to T$ a semigroup homomorphism, that is $\pi(x+y)=\pi(x)+\pi(y)$ for all $x,y\in S$. Clearly, then $\pi(tx)=t\pi(x)$ for all $x\in S$ and $t\in\Nd$. If $0\in S$ then
\begin{equation*}
  \pi(0)=\pi(0+0)=\pi(0)+\pi(0),
\end{equation*}
which implies $\pi(0)=0$. Hence in this case $T$ is a vector monoid as well. If $0\not\in S$, we can extend $\pi$ to a homomorphism from $S\cup\{0\}$ to $T\cup\{0\}$ by setting $\pi(0)=0$.

\begin{proposition}\label{t:shomo}
Let $S$ and $T$ be vector semigroups and $\pi\maps S\to T$ a semigroup homomorphism. Then $\pi$ is extended to a linear operator from $L=\Lin(S)$ to $\lin(T)$.
\end{proposition}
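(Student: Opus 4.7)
The plan is to extend $\pi$ in two stages: first as a group homomorphism on the group $G=\gp(S)=S-S$, and then as an $\Rd$-linear map on $L=\lin(G)=\lin(S)$.

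\emph{Stage 1 (extension to $G$).} If $0\notin S$, I adjoin it by setting $\pi(0)=0$; this preserves the semigroup-homomorphism property since any $\pi(0+x)=\pi(x)$ forces $\pi(0)=0$ modulo cancellation in $\gp(T)$. For $g\in G$, write $g=x-y$ with $x,y\in S\cup\{0\}$ and set $\pi(g)=\pi(x)-\pi(y)$. Well-definedness amounts to the following: if $x-y=x'-y'$, i.e.\ $x+y'=x'+y$ in $S\cup\{0\}$ (using that $S$ lies inside the abelian group $\Zd^m$), then $\pi(x)+\pi(y')=\pi(x')+\pi(y)$, which gives $\pi(x)-\pi(y)=\pi(x')-\pi(y')$ in $\gp(T)$. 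A routine check then shows that the extension is a group homomorphism from $G$ into $\gp(T)$.

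\emph{Stage 2 (extension to $L$).} Being a subgroup of the finitely generated free abelian group $\Zd^m$, $G$ is itself free abelian of finite rank; fix a $\Zd$-basis $(e_1,\dots,e_k)$ of $G$. I claim this is also an $\Rd$-basis of $L$. Spanning is clear: $L=\lin(G)$ and every element of $G$ is a $\Zd$-combination of the $e_i$. For independence, note that the $e_i$ have integer entries, so the $k\times m$ matrix they form has the same rank over $\Rd$ as over $\Qd$ (the rank is detected by the non-vanishing of the minors \refeq{d}, which are integers and thus are either zero in all of $\Zd$, $\Qd$ and $\Rd$ or in none of them, as noted in the introductory remarks); hence any nontrivial $\Rd$-dependency among the $e_i$ would, after clearing denominators in the corresponding $\Qd$-dependency, yield a nontrivial $\Zd$-dependency, contradicting the choice of basis. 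I now define $\pi$ on $L$ by $\pi\bigl(\sum s_ie_i\bigr)=\sum s_i\pi(e_i)$ for $s_i\in\Rd$. Since $\pi(e_i)\in\gp(T)\subset\lin(T)$, this is an $\Rd$-linear map from $L$ to $\lin(T)$. For $g=\sum n_ie_i\in G$ with $n_i\in\Zd$, the Stage-2 formula gives $\sum n_i\pi(e_i)$, which equals the Stage-1 value by the group-homomorphism property; so the extension agrees with $\pi$ on $G$, and in particular on $S$.

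The main obstacle I foresee is the transfer of linear independence from $\Zd$ to $\Rd$ for integer vectors, and the corresponding claim that a $\Zd$-basis of $G$ is an $\Rd$-basis of $\lin(G)$; once this minor point is disposed of via the determinant argument just indicated, the rest of the proof is bookkeeping.
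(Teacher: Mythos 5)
Your proof is correct, but it takes a genuinely different route from the paper's. You factor the extension through the group $G=\gp(S)=S-S$: first extend $\pi$ to a group homomorphism on $G$ (well-defined by cancellation in $\Zd^m$ and in the target), then use that $G$, as a subgroup of $\Zd^m$, is free abelian of finite rank and that a $\Zd$-basis of $G$ is an $\Rd$-basis of $L$. The paper never leaves the semigroup: it fixes a basis $(a_1,\dots,a_k)\subset S$ of the vector space $L$ (not a $\Zd$-basis of the group), lets $u$ be the linear operator with $u(a_i)=\pi(a_i)$, writes $x=\sum_i s_ia_i$ with $s_i\in d^{-1}\Zd$, and, setting $t_i=ds_i$, separates positive and negative parts to obtain the identity $dx+\sum_i t_i^-a_i=\sum_i t_i^+a_i$ between elements of $S$; applying the semigroup homomorphism $\pi$ to both sides and cancelling in $\Zd^n$ gives $\pi(x)=u(x)$. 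So the paper's $t_i^{\pm}$ trick performs on the spot the cancellation you package into Stage 1. Your version buys a cleaner conceptual factorization (semigroup $\to$ group $\to$ vector space) at the price of the structure theorem for subgroups of $\Zd^m$ plus the rank-transfer argument from $\Zd$ to $\Rd$; the paper's version needs only its introductory observation that coordinates of integer vectors in an integer basis lie in $d^{-1}\Zd$. Two cosmetic remarks: your justification of $\pi(0)=0$ conflates the case $0\in S$ (where $\pi(0)=\pi(0)+\pi(0)$ forces it, as noted before the proposition) with the case $0\notin S$ (where you simply define it and the homomorphism property is trivially preserved); and in Stage 2 the $e_i$ need not lie in $S$, so $\pi(e_i)$ should be read as the Stage-1 value. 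Neither affects correctness.
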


\begin{proof}
Without loss of generality we can assume that both $S$ and $T$ are monoids and $\pi(0)=0$. Let $(a_1,\dots,a_k)\subset S$ be a basis of $L$ and $u$ the unique linear operator from $L$ to $\lin(T)$ which maps $a_i$ to $\pi(a_i)$ for $i=1,\dots,k$. Let $x\in S$, then $x=\sum_{i=1}^ks_ia_i$ with some $s_i\in d^{-1}\Zd$, where $d$ is some natural number. Set $t_i=ds_i$. Then
\begin{gather*}
  dx+\sum_{i=1}^kt_i^-a_i=\sum_{i=1}^kt_i^+a_i,\\
  d\pi(x)+\sum_{i=1}^kt_i^-\pi(a_i)=\sum_{i=1}^kt_i^+\pi(a_i),\\
  d\pi(x)+\sum_{i=1}^kt_i^-u(a_i)=\sum_{i=1}^kt_i^+u(a_i)
\end{gather*}
and therefore
\begin{equation*}
  \pi(x)
  =\frac1d\Bigl(\sum_{i=1}^kt_i^+u(a_i)-\sum_{i=1}^kt_i^-u(a_i)\Bigr)
  =\frac1d\sum_{i=1}^kt_iu(a_i)
  =\sum_{i=1}^ks_iu(a_i)
  =u(x).
\end{equation*}
\end{proof}

In the sequel we will denote the extended operator by the same letter $\pi$, and $\norm{\pi}$ will stand for its norm. Hence $\abs{\pi(x)}\le\norm{\pi}\,\abs{x}$ for all $x\in S$.

\paragraph{$S$-cones.}

Let $S$ be a vector semigroup and $L=\lin(S)$. By \emph{$S$-cone} we call any set of the form $C=\cone(a_1,\dots,a_k)$, where $(a_1,\dots,a_k)\subset S$ is a basis of $L$. Such a cone is the image of $\Rd_+^k$ by the homeomorphism $(s_1,\dots,s_k)\mapsto \sum_{i=1}^ks_ia_i$, therefore $\lin(C)=L$ and
\begin{equation*}
  \rint(C)=\{\tsum_{i=1}^ks_ia_i\mid s_1,\dots,s_k>0\}.
\end{equation*}
Moreover, equality $-a_j=\sum_{i=1}^ks_ia_i$ implies $s_j=-1$, therefore $-a_j\not\in C$ for all $j$, and, by Proposition\ref{t:linealas}, $C$ is pointed.

If $(a_1,\dots,a_k)\subset S^*$ is a basis of $L$ then $n_ia_i\in S$ with some natural $n_i$. Then $\cone(a_1,\dots,a_k)=\cone(n_1a_1,\dots,n_ka_k)$ is an $S$-cone.

\begin{proposition}\label{t:vidiniskugis}
Let $S$ be a $k$-dimensional vector semigroup and $O$ its asymptotic cone. For each $x\in O\be\{0\}$ there exists an $S$-cone $C=\cone(a_1,\dots,a_k)$ with $x\in\rint(C)$. If $\pi$ is a nontrivial homomorphism from $S$ to another vector semigroup $T$ then $a_i$ can be chosen so that $\pi(a_i)\ne 0$ for all $i$.
\end{proposition}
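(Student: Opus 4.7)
The plan is to first produce $k$ linearly independent vectors in $O$ whose strict positive hull contains $x$, then approximate them by rational vectors and scale to land in $S$; the homomorphism constraint will be handled by an extra density input at the approximation step.

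For the core construction, since $x\ne 0$ and $\dim L=k$ with $L=\lin(S)$, I pick a hyperplane $L'\subset L$ not containing $x$, with basis $(e_1,\ldots,e_{k-1})$. For small $\delta>0$ set
\begin{equation*}
y_i=x+\delta e_i\quad (i=1,\ldots,k-1),\qquad y_k=x-\delta(e_1+\cdots+e_{k-1}).
\end{equation*}
Then $\sum_{i=1}^{k}y_i=kx$, so $x=\tfrac{1}{k}(y_1+\cdots+y_k)$ is already a strict positive combination of the $y_i$. For linear independence, a relation $\sum c_iy_i=0$ unfolds to $(\sum c_i)x+\delta\sum_{i<k}(c_i-c_k)e_i=0$, and since $(x,e_1,\ldots,e_{k-1})$ is a basis of $L$ (using $x\notin L'$ and $\dim L=k$), this forces $\sum c_i=0$ and $c_i=c_k$ for $i<k$, hence all $c_i=0$. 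For $\delta$ small enough, every $y_i$ lies in the open set $O$.

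To descend into $S$, I note that $L$ has a basis inside $S\subset\Zd^m$, so $L\cap\Qd^m$ is dense in $L$. Both conditions ``$(y_1,\ldots,y_k)$ linearly independent'' and ``$x\in\rint\cone(y_1,\ldots,y_k)$'' are open in the tuple $(y_i)\in L^k$, by continuity of the determinants \refeq{d} and \refeq{dj} (Cramer's rule expresses the coefficients $s_i$ in $x=\sum s_iy_i$ as continuous functions of the tuple). Hence I perturb each $y_i$ to some $y_i'\in O\cap\Qd^m$ preserving both properties, and Proposition~\ref{t:simpleksas}(3) gives $y_i'\in S^*$, so $y_i'=a_i/n_i$ with $a_i\in S$ and $n_i\in\Nd$; then $\cone(a_1,\ldots,a_k)=\cone(y_1',\ldots,y_k')$ is the desired $S$-cone.

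For the homomorphism part, Proposition~\ref{t:shomo} extends $\pi$ to a linear map on $L$, and nontriviality makes $\ker\pi$ a proper subspace. Therefore $O\setminus\ker\pi$ is open and dense in $O$, and so is $(O\setminus\ker\pi)\cap\Qd^m$. Picking each $y_i'$ from this latter set in the perturbation step yields $\pi(a_i)=n_i\pi(y_i')\ne 0$. The only delicate point is the simultaneous preservation of linear independence, interior membership of $x$, and avoidance of $\ker\pi$ under the rational perturbation, but all three are open conditions on a dense locus, so I expect no real obstacle.
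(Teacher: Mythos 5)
Your proof is correct, but it takes a somewhat different route from the paper's in both halves of the argument. For the core construction, you build a surrounding simplex directly with $x$ as its barycenter by pairing $x$ with a transversal hyperplane basis; this is essentially the construction the paper uses to prove Proposition~\ref{t:simpleksas}(1), transplanted here. The paper's own proof of this proposition instead invokes Carath\'eodory's theorem to write $x=s_1a_1+\cdots+s_ka_k$ with linearly independent $a_i\in O$ and $s_i\ge 0$, then tilts one generator to $a=a_1-\epsilon(a_2+\cdots+a_k)$ to force all coordinates of $x$ to be strictly positive before perturbing into $S^*$; the paper also uses $O\subset\clos{S^*}$ directly rather than routing through Proposition~\ref{t:simpleksas}(3). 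For the homomorphism constraint, you argue by density: $\ker\pi$ is a proper subspace of $L$ (by Proposition~\ref{t:shomo} and nontriviality), hence nowhere dense, so $(O\setminus\ker\pi)\cap\Qd^m$ is dense and the rational approximants can be chosen there. The paper instead uses an arithmetic trick: it replaces any $b_i$ with $\pi(b_i)=0$ by $b'_i=nb_i+a_0$ for large $n$, where $\pi(a_0)\ne 0$, and checks that $b'_i/\abs{b'_i}\to b_i/\abs{b_i}$ so that the open conditions survive. Both arguments are valid; your density argument is conceptually smoother but leans on a couple of extra lemmas, while the paper's arithmetic perturbation is more self-contained and avoids discussing density of $\Qd^m$-points altogether.
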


\begin{proof}
Let $L=\lin(S)$ and $x\in O\be\{0\}$. It follows from the Carath\'eodory theorem \citep[Theorem~1.55]{Bruns-2009} that $x=s_1a_1+\cdots+s_ka_k$ with some linearly independent $(a_1,\dots,a_k)\subset O$ and some $s_1,\dots,s_k\ge 0$. Since $x\ne 0$, at least one of coordinates $s_i$ differs from 0. Let, for instance, $s_1>0$. Denote $a=a_1-\epsilon(a_2+\cdots+a_k)$, where $\epsilon$ is so small that $a\in O$. Clearly, $(a,a_2,\dots,a_k)$ is another basis of $L$. If $t_1,\dots,t_k$ are the coordinates of $x$ in that basis, then
\begin{equation*}
  \sum_{i=1}^ks_ia_i=t_1a+\sum_{i=2}^kt_ia_i=t_1a_1+\sum_{i=2}^k(t_i-\epsilon t_1)a_i,
\end{equation*}
which implies $t_1=s_1>0$ and $t_i=s_i+\epsilon s_1>0$ for $i=2,\dots,k$. Now let $b_1,\dots,b_k$ be the vectors from $S^*$, such that $\abs{b_1-a}<\delta$ and $\abs{b_i-a_i}<\delta$ for $i=2,\dots,k$. If $\delta$ is small enough then $(b_1,\dots,b_k)$ is yet another basis of $L$ and all coordinates of $x$ in that basis are positive. Hence $C=\cone(b_1,\dots,b_k)$ is an $S$-cone and $x\in\rint(C)$.

Let $\pi$ be a nontrivial homomorphism from $S$ to $T$ and $\pi(a_0)\ne 0$ for some $a_0\in S$. Define $b'_i=b_i$ if $\pi(b_i)\ne 0$, and $b'_i=nb_i+a_0$ if $\pi(a_i)=0$, where $n$ is a big natural number. Then $\pi(b'_i)\ne 0$ for all $i$. Moreover,
\begin{equation*}
  \frac{b'_i}{\abs{b'_i}}
  =\frac{nb_i+O(1)}{n\abs{b_i}+O(1)}
  =\frac{b_i}{\abs{b_i}}+O(1/n),
\end{equation*}
as $n\to\infty$; therefore, for $n$ large enough, the family $(b'_1,\dots,b'_k)$ is linearly independent and $x\in\rint(\cone(b'_1,\dots,b'_k))$.
\end{proof}

\paragraph{Gauges.}

If $C$ is a convex cone and $q\maps C\to[-\infty;\infty)$, we call the function $q$ \emph{a gauge} if, for all $x,y\in C$ and $s>0$,
\begin{equation}\label{e:gauge}
  q(x+y)\le q(x)+q(y)\quad\text{and}\quad q(sx)=sq(x).
\end{equation}
If $C$ is merely a semigroup and condition~\refeq{gauge} holds for all natural $s$, we call $q$ a \emph{$\Zd$-gauge} on $C$. If $C$ is a semigroup and $sx\in C$ for all $x\in C$ and rational $s>0$, we call $C$ \emph{a $\Qd$-cone}. In that case $q$ is called \emph{a $\Qd$-gauge}  if condition~\refeq{gauge} holds for all rational $s>0$.

Note that if $S$ is a vector semigroup then $S^*$ is a $\Qd$-cone.

\begin{proposition}\label{t:kalibras}
Let $q$ be a $\Zd$-gauge on a vector semigroup $S$ and $O$ the asymptotic cone of $S$. Then

1) the function $q$ is extended in a unique way to a $\Qd$-gauge on $S^*$;

2) the restriction of $q$ on $O\cap S$ is extended in a unique way to a gauge on $O$.
\end{proposition}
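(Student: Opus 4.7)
The plan is to handle the two parts separately, with Part~2 building on Part~1 together with the simplex results from Proposition~\ref{t:simpleksas}. For Part~1, given $x \in S^*$ I would pick any $n \in \Nd$ with $nx \in S$ and set $\tilde q(x) := q(nx)/n$; well-definedness follows from $q(mn\,x) = m\,q(nx) = n\,q(mx)$, where both equalities apply the $\Zd$-homogeneity of $q$ on $S$ to the admissible arguments $nx \in S$ and $mx \in S$. Rational positive homogeneity and subadditivity on $S^*$ then reduce, via a common-denominator trick, to the corresponding properties of $q$ on $S$, and uniqueness is automatic since any $\Qd$-gauge extension must satisfy $n\tilde q(x) = \tilde q(nx) = q(nx)$.

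\emph{Part 2: reduction to a dense rational subset.} I would apply Part~1 to the sub-semigroup $O \cap S$, obtaining a $\Qd$-gauge on $(O \cap S)^* = O \cap S^* = O \cap \Qd^m$, where the last equality uses Proposition~\ref{t:simpleksas}(3). It then remains to extend this $\Qd$-gauge from the dense subset $O \cap \Qd^m$ to the relatively open cone $O$. The key preliminary is a dichotomy for any gauge (resp.\ $\Qd$-gauge) $\bar q$ on $O$ (resp.\ on $O \cap \Qd^m$): if $\bar q(x_0) = -\infty$ for some $x_0$, then for every $x$ in the domain the openness of $O$ in $L = \lin(S)$ gives $x - \eps x_0$ back in the domain for small $\eps > 0$ (rational, if $x \in \Qd^m$), so subadditivity forces $\bar q(x) \le \bar q(\eps x_0) + \bar q(x - \eps x_0) = -\infty$. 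Hence $\bar q$ is either identically $-\infty$ or finite everywhere.

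\emph{Part 2: construction and continuity.} In the $-\infty$ case, I set $\tilde q \equiv -\infty$ on $O$; both existence and uniqueness are trivial. In the finite case, I would first show $q$ is locally bounded above on $O \cap \Qd^m$ by invoking Proposition~\ref{t:simpleksas}(1): each $x \in O$ lies in $\rint(P)$ for some $S$-simplex $P = \conv(a_0, \dots, a_k)$ with vertices in $O \cap S^* \subset O \cap \Qd^m$, and for rational $y \in \rint(P)$ the barycentric coordinates are rational, so clearing denominators and applying the $\Zd$-gauge property yields $q(y) \le \max_i q(a_i)$. Local boundedness below at a rational point $x$ then follows from the midpoint identity $x = \tfrac{1}{2}((x+v) + (x-v))$ with small rational $v$, and the standard convex-analysis argument (local boundedness plus $\Qd$-convexity producing Lipschitz estimates on slightly smaller neighborhoods) promotes this to continuity of $q$ on $O \cap \Qd^m$. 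The continuous extension $\tilde q\maps O \to \Rd$ is then the unique continuous function agreeing with $q$ on $O \cap \Qd^m$, and subadditivity and positive homogeneity for real scalars pass from the $\Qd$ case by continuity. Uniqueness follows because the dichotomy forces any two gauge extensions to be both $-\infty$ or both finite and continuous, and continuous functions agreeing on a dense set coincide.

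\emph{Main obstacle.} The technical heart of the argument is the continuity step on $O \cap \Qd^m$: turning local upper boundedness into genuine continuity at every rational point, and then extending by continuity in a way that preserves the gauge inequalities. Everything else is straightforward bookkeeping with common denominators.
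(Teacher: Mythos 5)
Your argument is correct and follows the paper's overall strategy (extend first to a $\Qd$-gauge, use the simplices of Proposition~\ref{t:simpleksas} for local upper boundedness, separate off the $-\infty$ case, extend by continuity in the finite case), but with two genuine simplifications. Where the paper's Step~3 starts from a sequence $x_n\to x\in O$ with $q^*(x_n)\to-\infty$ and then propagates the value $-\infty$ from a point $y$ near $x$ to an arbitrary $y'\in O\cap S^*$ along a finite chain of overlapping neighborhoods covering the segment $[y;y']$, you prove a cleaner pointwise dichotomy: if a ($\Qd$-)gauge on a relatively open cone equals $-\infty$ at a single point $x_0$, then for any $x$ in the domain $x-\eps x_0$ is again in the domain for small (rational) $\eps>0$, and subadditivity plus homogeneity give $q(x)\le q(\eps x_0)+q(x-\eps x_0)=-\infty$. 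This bypasses the chain argument entirely and at the same time serves as the uniqueness lemma for the real-valued extension. Second, to get local boundedness below you use the midpoint inequality $2q(x)\le q(x+v)+q(x-v)$ together with the local upper bound, whereas the paper folds lower boundedness into the hypothesis of its Step~4 (absence of a sequence tending to $-\infty$) and deduces boundedness on compacts; both routes work. One small imprecision worth fixing: Proposition~\ref{t:simpleksas}(1) as stated only guarantees an $S$-simplex with vertices in $S^*$, not in $O\cap S^*$; its proof does place the unperturbed vertices $a_i$ in $O$ and the perturbation can be kept small enough to stay in $O$, but it is simpler (and is what the paper does in its Step~2) to apply the upper bound $q^*(x)\le\max_i q^*(a_i)$ with $q^*$ the full $\Qd$-gauge on $S^*$, so it does not matter whether the $a_i$ lie in $O$.
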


\begin{proof}
\emph{Step~1:} we prove statement 1.

For each $x\in S^*$ there exists a natural $k$, such that $kx\in S$. If $lx\in S$ with another natural $l$ then
\begin{equation*}
  lq(kx)=q(klx)=kq(lx),
\end{equation*}
which implies
\begin{equation*}
  \frac{q(kx)}{k}=\frac{q(lx)}{l}.
\end{equation*}
Therefore the following definition is correct:
\begin{equation*}
  q^*(x)=\frac{q(kx)}{k}\quad\text{if $kx\in S$.}
\end{equation*}

Let $x\in S^*$, $0<t\in\Qd$ and $kx\in S$, $lt\in\Nd$ with some natural $k$ and $l$. Then $kltx\in S$ and therefore
\begin{equation*}
  q^*(tx)=\frac{q(kltx)}{kl}=\frac{ltq(kx)}{kl}=tq^*(x).
\end{equation*}
If $x,y\in S^*$ then $kx\in S$ and $ly\in S$ with some natural $k$ and $l$. Then also $kl(x+y)=l(kx)+k(ly)\in S$ and therefore
\begin{equation*}
  q^*(x+y)=\frac{q(klx+lky)}{kl}\le\frac{lq(kx)+kq(ly)}{kl}=q^*(x)+q^*(y).
\end{equation*}
Hence $q^*$ is a $\Qd$-gauge on $S^*$.

Obviously, $q^*(x)=q(x)$ for $x\in S$ and $q^*$ is the unique $\Qd$-gauge on $S^*$ which extends $q$.

The remaining proof is similar to that of continuity of a convex function. We denote $L=\lin(S)$, $k=\dim{L}$ and speaking about a neighborhood of some $x\in L$ we mean a neighborhood in $L$.

\emph{Step~2:} we prove that for each $x_0\in O$ there exists its convex neighborhood $U\subset O$, such that $q^*$ is bounded from above on $U\cap S^*$.

Let $U=\rint(P)$, where $P$ is an $S$-simplex with vertices $a_0,\dots,a_k$, such that $x_0\in\rint(P)$. If $x\in U\cap S^*$ then $x=s_0a_0+\cdots+s_ka_k$ with some positive $s_i$ whose sum equals 1. Since the family $(a_1-a_0,\dots,a_k-a_0)$ is linearly independent, equality
\begin{equation*}
  x-a_0=s_1(a_1-a_0)+\cdots+s_k(a_k-a_0)
\end{equation*}
implies that $s_1,\dots,s_k$ are rational numbers. Clearly, then $s_0$ is rational, too. Therefore
\begin{equation*}
  q^*(x)\le\sum_{i=0}^ks_iq^*(a_i)\le c,
\end{equation*}
where $c=\max_iq^*(a_i)$.

\emph{Step~3:} the case, where $q^*(x_n)\to-\infty$ for some sequence $S^*\ni x_n\to x\in O$.

Let $U\subset O$ be a convex neighborhood of $x$, such that $q^*$ is bounded from above on $U\cap S^*$. Fix any $y\in U\cap S^*$, find a rational $\epsilon$, such that $y-\epsilon(x-y)\in U$, and set $z_n=y-\epsilon(x_n-y)$. If $n$ is large enough then $z_n\in U\cap\Qd^m$, therefore $z_n\in S^*$ (by Proposition~\ref{t:simpleksas}) and the sequence $q^*(z_n)$ is bounded from above. Then $y=(z_n+\epsilon x_n)/(1+\epsilon)$ implies
\begin{equation*}
  q^*(y)\le\frac{q^*(z_n)+\epsilon q^*(x_n)}{1+\epsilon}\to-\infty,
\end{equation*}
i.e.\ $q^*(y)=-\infty$.

Now let $y'$ be an arbitrary point in $O\cap S^*$. The segment $I=[y;y']$ is a compact set covered by sets $U(x')$, $x'\in I$, where $U(x')\subset O$ is a convex neighborhood of $x'$, such that $q^*$ is bounded from above on $U(x')\cap S^*$. Hence there exists a finite covering of $I$ by the sets $U(x')$. Let $U_1\cups U_k$ be such a covering, and $y\in U_1$, $y'\in U_k$, $U_i\cap U_{i+1}\ne\0$ for $i=1,\dots,k-1$.

Choose a $y_i\in S^*$ in each $U_i\cap U_{i+1}$ and denote $y_0=y$, $y_k=y'$. Then $[y_{i-1};y_i]\subset U_i$ for all $i=1,\dots,k$. We know already that $q^*(y_0)=-\infty$, let us prove that $q^*(y_i)=-\infty$ for all $i$. Let $i\ge 1$ and suppose $q^*(y_{i-1})=-\infty$. Find a rational $\epsilon$ small enough, so that $z=y_i-\epsilon(y_{i-1}-y_i)$ lies in $U_{i}$. Since $z\in\Qd^m$, it belongs also to $S^*$. Then $y_i=(z+\epsilon y_{i-1})/(1+\epsilon)$ implies
\begin{equation*}
  q^*(y_i)\le\frac{q^*(z)+\epsilon q^*(y_{i-1})}{1+\epsilon}=-\infty,
\end{equation*}
i.e.\ $q^*(y_i)=-\infty$.

Hence $q^*(y')=-\infty$. We thus proved that in the considered case $q^*(x')=-\infty$ for any $x'\in O\cap S^*$. Define $\bar q(x')=-\infty$ for all $x'\in O$. Then $\bar q$ is a gauge on $O$, which extends $q^*$. The extension is unique, because every convex function on $O$, which equals $-\infty$ at some point, is identically equal to $-\infty$.

\emph{Step~4:} the remaining case.

Suppose that there is no sequence $(x_n)\subset S^*$ with $x_n\to x\in O$ and $q^*(x_n)\to-\infty$. In that case $q^*$ is bounded from below in each compact subset of $O$ and therefore, by the result of Step~2, is bounded in some neighborhood of each point of $O$. We prove that $q^*$ is locally Lipschitz.

Let $x_0\in O$. Find $\epsilon$ and $c$, such that $U(x_0,2\epsilon)\cap L\subset O$ and $\abs{q^*(x)}\le c$ for all $x\in U(x_0,2\epsilon)\cap S^*$. Take any two different $x,y\in U(x_0,\epsilon)\cap S^*$ and denote $k=\floor{\epsilon/\abs{x-y}}$ and $z=(k+1)x-ky$. Since
\begin{equation*}
  \abs{z-x_0}\le\abs{x-x_0}+k\abs{x-y}<\epsilon+\epsilon=2\epsilon,
\end{equation*}
we get $z\in U(x_0,2\epsilon)\subset O$. Moreover, $z\in\Qd^m$ and therefore $z\in S^*$. Then $x=(ky+z)/(k+1)$ implies
\begin{equation*}
  q^*(x)-q^*(y)
  \le\frac{kq^*(y)+q^*(z)}{k+1}-q^*(y)
  =\frac{q^*(z)-q^*(y)}{k+1}
  \le\frac{2c}{k+1}
  \le\frac{2c}{\epsilon}\abs{x-y}.
\end{equation*}
Because of symmetry, the analogous inequality with $q^*(y)-q^*(x)$ on the left hand side also holds. Hence
\begin{equation*}
  \abs{q^*(x)-q^*(y)}\le\frac{2c}{\epsilon}\abs{x-y}.
\end{equation*}

It is well known that any uniformly continuous (and hence any Lipschitz) real function defined on a dense subset of a metric space $E$ is extended in a unique way to a continuous function defined on the whole $E$. Therefore there exists an open covering $(U_i)$ of $O$ and, for each $i$, a continuous function $\bar q_i\maps U_i\to\Rd$, which agrees with $q^*$ on $U_i\cap S^*$. By continuity, any two functions $\bar q_i$ and $\bar q_j$ agree on the intersection $U_i\cap U_j$. Therefore there exists a continuous function $\bar q\maps O\to\Rd$, which extends the restriction of $q^*$ on $O\cap S^*$.

For all $x',y'\in S^*$ and positive $t'\in\Qd$,
\begin{equation*}
  q^*(x'+y')\le q^*(x')+q^*(y')\quad\text{and}\quad
  q^*(t'x')=t'q^*(x').
\end{equation*}
Taking the limits, as $x'\to x\in O$, $y'\to y\in O$ and $t'\to t>0$, yields that $\bar q$ is a gauge on $O$. The extension $\bar q$ is unique, because every convex function on $O$ is continuous.
\end{proof}

\section{Subadditive ergodic theorem}\label{s:proof}

\paragraph{Almost independence.}

Recall from the Introduction that a family of random variables $(Z_x\mid x\in S)$ is called \emph{almost independent}, if there exists a $c<\infty$, such that, for all nonempty $A,B\subset S$ with $\rho(A,B)>c$, the families $(Z_x\mid x\in A)$ and $(Z_x\mid x\in B)$ are independent. Here $S\subset\Zd^m$ and $\rho(A,B)=\inf_{x\in A,y\in B}\abs{x-y}$ is the usual distance between sets $A$ and $B$. It is easily checked that the notion of almost independence does not depend on the norm $\abs{\cdot}$ on $\Rd^m$.

A sequence $(Z_i\mid i\ge 1)$ is almost independent when it is $l$-dependent for some natural $l$, that is, if for all $n$ the families $(X_1,\dots,X_n)$ and $(X_i\mid i\ge n+l)$ are independent.

If $(Z_i)$ is a sequence of iid zero-mean random variables and $S_n=Z_1+\cdots+Z_n$, then almost surely
\begin{equation*}
  S_n-n\epsilon=n\Bigl(\frac{S_n}{n}-\epsilon\Bigr)\to -\infty,
\end{equation*}
and therefore the random variable
\begin{equation}\label{e:M}
  M=\sup_{n\ge 0}(S_n-n\epsilon)
\end{equation}
is almost surely finite. It is also known \citep[Theorem~4.6.1 (iii)]{Borovkov-2008} that, for $k\ge2$, $\Mean\abs{Z}^k<\infty$ implies $\Mean M^{k-1}<\infty$. The following proposition extends this statement to the case, where $(Z_i)$ is almost independent.

\begin{proposition}\label{t:M}
Let $Z$ be a random variable with $\Mean\abs{Z}^k<\infty$ and $\Mean Z=0$, where $k\ge 2$ is a natural number. If $(Z_i)$ is an almost independent sequence of copies of $Z$, $S_n=Z_1+\cdots+Z_n$ and $M$ is defined by \refeq{M} then $\Mean M^{k-1}<\infty$.
\end{proposition}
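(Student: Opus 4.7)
The plan is to reduce the almost-independent case to the iid case of Borovkov's theorem by partitioning the sequence into finitely many iid subsequences separated by a distance exceeding the constant $c$ from the definition of almost independence.

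First I would fix a natural number $l>c$ and, for each $r\in\{1,\dots,l\}$, form the subsequence $Z^{(r)}_j=Z_{(j-1)l+r}$, $j\ge 1$. I would verify that each such subsequence $(Z^{(r)}_j\mid j\ge 1)$ is iid. Any two distinct indices of the form $(j-1)l+r$ differ by at least $l>c$, so by taking $A=\{(j_1-1)l+r\}$ and $B=\{(j_2-1)l+r,\dots,(j_n-1)l+r\}$ in the definition of almost independence, $Z^{(r)}_{j_1}$ is independent of $(Z^{(r)}_{j_2},\dots,Z^{(r)}_{j_n})$. An easy induction on $n$ then yields the mutual independence of any finite subfamily, and since each $Z^{(r)}_j$ has the same distribution as $Z$, the subsequence is iid with mean zero and finite $k$-th moment.

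Next, I would partition $S_n-n\eps$ along these subsequences. Let $n_r$ be the number of indices $i\in\{1,\dots,n\}$ with $i\equiv r\pmod{l}$, so that $n_1+\cdots+n_l=n$, and let $S^{(r)}_m=Z^{(r)}_1+\cdots+Z^{(r)}_m$. Then
\begin{equation*}
  S_n-n\eps=\sum_{r=1}^l\bigl(S^{(r)}_{n_r}-n_r\eps\bigr)\le\sum_{r=1}^l M^{(r)},\qquad M^{(r)}=\sup_{m\ge 0}\bigl(S^{(r)}_m-m\eps\bigr),
\end{equation*}
and taking the supremum over $n$ gives $M\le M^{(1)}+\cdots+M^{(l)}$. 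Since each $(Z^{(r)}_j)$ is iid with $\Mean Z^{(r)}_j=0$ and $\Mean\abs{Z^{(r)}_j}^k<\infty$, the cited result of Borovkov applied to each subsequence yields $\Mean(M^{(r)})^{k-1}<\infty$. Finally, using convexity of $t\mapsto t^{k-1}$ on $[0,\infty)$ for $k\ge 2$ (or Minkowski's inequality with exponent $k-1$), I would conclude
\begin{equation*}
  \Mean M^{k-1}\le l^{k-2}\sum_{r=1}^l\Mean(M^{(r)})^{k-1}<\infty.
\end{equation*}

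The main obstacle is the verification in step one that the subsequences are genuinely iid: the hypothesis only provides independence of two sub-families separated by distance $>c$, whereas mutual independence of an entire subsequence is needed. Getting this from the stated pairwise-block condition requires the inductive peeling argument above; the remainder of the proof is then a clean combination of the triangle inequality and the iid base case.
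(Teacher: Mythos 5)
Your proof is correct and follows essentially the same route as the paper's: fix $l$ exceeding the almost-independence constant, split the sequence into the $l$ arithmetic progressions $\bmod\ l$, bound $M$ by the sum of the corresponding suprema $M^{(r)}$, and apply the iid result of Borovkov to each. The paper phrases the reduction via "$l$-dependence" and leaves the iid-ness of each progression implicit, while you spell out the peeling induction; otherwise the arguments coincide.
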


\begin{proof}
Suppose that the sequence $(Z_i)$ is $l$-dependent. For each $r=1,\dots,l$ define
\begin{equation*}
  M_r=\sup_{j\ge 0}(Z_r+Z_{r+l}+\cdots+Z_{r+(j-1)l}-j\epsilon)=\sup_{j\ge 0}\sum_{i=0}^{j-1}(Z_{r+il}-\epsilon).
\end{equation*}
If $n=sl+r'$ with $s\in\Zd_+$ and $1\le r\le r'$ then
\begin{equation*}
  \sum_{i=1}^nZ_i-n\epsilon
  =\sum_{r=1}^{r'}\sum_{i=0}^{s}(Z_{r+il}-\epsilon)+\sum_{r=r'+1}^l\sum_{i=0}^{s-1}(Z_{r+il}-\epsilon)
  \le\sum_{r=1}^lM_{r},
\end{equation*}
and therefore $M\le\sum_{r=1}^lM_{r}$. Since each sequence $(Z_{r+il}\mid i\ge 0)$ consists of iid random variables, $M_r\in L^{k-1}(\Prob)$ for all $r$. Hence $M\in L^{k-1}(\Prob)$.
\end{proof}

Let $S$ be a vector semigroup. From now on we look at it as the locally compact metric space, $S\cup\{\infty\}$ being its one-point compactification. Hence if $(x_n)\subset S$ then $x_n\to\infty$ means $\abs{x_n}\to\infty$. If $h$ is some real function on $S$ then $h(x)\tends{x\to\infty}c$ means that $h(x_n)\to c$ for any sequence $(x_n)\subset S$ with $\abs{x_n}\to\infty$.

\begin{proposition}\label{t:max}
Let $k\ge 1$, $S$ be a $k$-dimensional vector semigroup, $Z$ a nonnegative random variable with $\Mean Z^k<\infty$ and $(Z_x\mid x\in S)$ an almost independent family of its copies. Then almost surely
\begin{equation*}
  \frac{Z_{x}}{\abs{x}}\to 0,\quad\text{as $S\ni x\to\infty$}.
\end{equation*}
\end{proposition}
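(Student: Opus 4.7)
The plan is to apply the first Borel--Cantelli lemma directly. Curiously, the almost-independence hypothesis appears unnecessary for this particular statement; only $\Mean Z^k<\infty$ and the $k$-dimensionality of $S$ will be used.

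First I would establish a lattice-counting estimate. Since $S$ is $k$-dimensional, $G=\gp(S)$ is a rank-$k$ discrete subgroup of $\Rd^m$ sitting inside $L=\lin(S)$. Fixing a linear isomorphism $L\cong\Rd^k$ that sends $G$ to a lattice and transporting the norm $\abs{\cdot}$ to a norm on $\Rd^k$, a standard volume-comparison argument gives $\abs{\{x\in G\mid \abs{x}\le n\}}=O(n^k)$ and, crucially,
\begin{equation*}
  \abs{\{x\in G\mid n\le\abs{x}<n+1\}}=O(n^{k-1})\quad\text{as $n\to\infty$.}
\end{equation*}
The same bounds hold for $S\subset G$.

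Next, for fixed $\eps>0$, each $Z_x$ has the same distribution as $Z$, so grouping the $x\in S\be\{0\}$ by $n=\floor{\abs{x}}$ gives
\begin{equation*}
  \sum_{0\ne x\in S}\Prob(Z_x>\eps\abs{x})\le C\sum_{n\ge 1}n^{k-1}\Prob(Z>\eps n).
\end{equation*}
The right-hand side is finite by the classical equivalence $\Mean Z^k<\infty\iff\sum_{n\ge 1}n^{k-1}\Prob(Z>n)<\infty$ (a layer-cake computation).

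Finally, the first Borel--Cantelli lemma---which requires no independence---gives, for each $\eps>0$, a set $W_\eps$ of full probability on which only finitely many $x\in S$ satisfy $Z_x>\eps\abs{x}$. Intersecting $W_{1/m}$ over $m\in\Nd$ produces a single set $W$ of full probability on which $Z_x/\abs{x}\to 0$ as $S\ni x\to\infty$. The only mildly technical point is the lattice counting estimate; I do not foresee any genuine obstacle.
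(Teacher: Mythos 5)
Your proof is correct, and it is worth comparing carefully with the paper's because it is genuinely more elementary. Both arguments share the same skeleton: group the lattice points of $S$ by the integer part of their norm, bound the count per shell by $O(n^{k-1})$, convert $\Mean Z^k<\infty$ into $\sum_n n^{k-1}\Prob(Z>\eps n)<\infty$ via a layer-cake computation, and finish with the first Borel--Cantelli lemma. The paper, however, first replaces $\abs{\cdot}$ with a bespoke norm $\norm{x}=\max_i\abs{dx^i}$ (where $x^i$ are coordinates in a basis from $S$ and $d$ makes them integers) so that the shell count is an elementary integer-lattice count, and then uses the almost-independence hypothesis to partition each sphere $S_n$ into classes $S_n(y)$ of mutually independent variables in order to bound $\Prob(\max_{x\in S_n}Z_x>\delta n)$ by $\abs{B}\bigl(1-(1-p_n)^{\abs{S_n}}\bigr)$ and then by $O(\abs{S_n}p_n)$. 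You observe, correctly, that the union bound $\Prob(\max_{x\in S_n}Z_x>\delta n)\le\abs{S_n}p_n$ gives the same (in fact a sharper) estimate with no independence at all, which means condition (b) of Theorem~\ref{t:main} is not actually used in this proposition. The independence machinery in the paper's version is therefore superfluous here, and also introduces an avoidable wrinkle: the inequality $1-(1-p_n)^{\abs{S_n}}\le 1-\e^{-\abs{S_n}p_n/2}$ goes the wrong way (since $\log(1-p)<-p/2$ for $p>0$); it would need, say, $\e^{-2\abs{S_n}p_n}$ for large $n$, and is in any case redundant once you use Bernoulli's inequality $1-(1-p)^n\le np$. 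The only thing I would tighten in your write-up is the lattice-counting step: rather than appeal to a generic ``volume comparison'' for the original norm, you could simply adopt the paper's integer-valued norm $\norm{\cdot}$ (equivalent to $\abs{\cdot}$ on the finite-dimensional $L$), which makes $\abs{S_n}=O(n^{k-1})$ immediate and avoids any discussion of boundary error terms.
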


\begin{proof}
Let $(a_1,\dots,a_k)\subset S$ be a basis of $L=\lin(S)$ and $x^i$ denote the coordinates of a vector $x\in L$ in this basis. Let $d$ be a natural number, such that $dx^i\in\Zd$ for all $x\in S$ and all $i$. For $x\in L$ define $\norm{x}=\max_i\abs{dx^i}$. It is another norm on $L$, which takes integer values for $x\in S$. All norms in a finite-dimensional space are equivalent, therefore it suffices to prove that $Z_x/\norm{x}\to 0$, as $S\ni x\to\infty$.

Set $S_n=\{x\in S\mid\norm{x}=n\}$. Then
\begin{equation}\label{e:Sn}
  \abs{S_n}
  \le\abs{\{(s_1,\dots,s_k)\in\Zd^k\mid\max_i\abs{s_i}=n\}}
  =O(n^{k-1}),
\end{equation}
as $n\to\infty$. Next, fix $\delta$ and denote $p_n=\Prob(Z>n\delta)$. Then
\begin{multline}
  \sum_{n\ge 1}n^{k-1}p_n
  =\sum_{n\ge 1}n^{k-1}\sum_{i\ge n}\Prob(i<Z/\delta\le i+1)\\
  =\sum_{i\ge 1}\Prob(i<Z/\delta\le i+1)\sum_{n=1}^in^{k-1}
  \le\sum_{i\ge 1}i^k\Mean\1_{i<Z/\delta\le i+1}
  \le\Mean(Z/\delta)^k
  <\infty.\label{e:npn}
\end{multline}

For $A,B\subset S$ denote $\bar\rho(A,B)=\inf_{x\in A,y\in B}\norm{x-y}$, and let $l$ be a natural number, such that $\bar\rho(A,B)\ge l$ implies independence of families $(Z_x\mid x\in A)$ and $(Z_x\mid x\in B)$. For $y\in S$ define
\begin{equation*}
  S_n(y)=\{x\in S_n\mid dx\equiv dy\pmod*{l}\},
\end{equation*}
where  $dx\equiv dy\pmod{l}$ mean that $dx^i\equiv dy^i\pmod{l}$ for all $i=1,\dots,k$. It is easily seen that there exists a finite $B\subset S$, such that $S_n=\bigcup_{y\in B}S_n(y)$ for all $n$.

If $x,x'\in S$, $dx\equiv dx'\pmod {l}$ and $x\ne x'$, then $\norm{x-x'}\ge l$. Therefore each family $(Z_x\mid x\in S_n(y))$ consists of independent random variables. Then, for $n$ large enough,
\begin{multline*}
  \Prob\Bigl(\max_{x\in S_n}Z_x>\delta n\Bigr)
  \le\sum_{y\in B}\Prob\Bigl(\max_{x\in S_n(y)}Z_x>\delta n\Bigr)
  \le\sum_{y\in B}(1-(1-p_n)^{\abs{S_n(y)}})\\
  \le\abs{B}(1-(1-p_n)^{\abs{S_n}})
  \le\abs{B}(1-\e^{-\abs{S_n}p_n/2})
  \le\abs{B}\,\abs{S_n}p_n/2,
\end{multline*}
and \refeq{Sn}--\refeq{npn} imply
\begin{equation*}
  \sum_{n\ge 1}\Prob\Bigl(\max_{x\in S_n}Z_x>\delta n\Bigr)<\infty.
\end{equation*}

By the Borel-Cantelli lemma, almost surely only finite number of events $\{\max_{x\in S_n}Z_x>\delta n\}$ occur. Therefore almost surely
\begin{equation*}
  \varlimsup_{n\to\infty}\frac{1}{n}\max_{x\in S_n}Z_x\le\delta.
\end{equation*}
The obtained inequality is valid for any $\delta$, therefore, for almost all $\omega$,
\begin{equation*}
  \frac{1}{n}\max_{\norm{x}=n}Z_x(\omega)\to 0.
\end{equation*}
Fix $\omega$, for which the latter relation holds. Then $\norm{x_n}\to\infty$ implies
\begin{equation*}
  \frac{Z_{x_n}(\omega)}{\norm{x_n}}
  \le\frac{1}{\norm{x_n}}\max_{\norm{x}=\norm{x_n}}Z_x(\omega)\to 0.
\end{equation*}
\end{proof}

\paragraph{The maximal ergodic theorem.}

Recall from Introduction that if $\Omega$ is a measurable space, then any measurable $f\maps\Omega\to\Omega$ is called a \emph{transformation} of $\Omega$. If $f$ is a transformation, we write $f\omega$ instead of $f(\omega)$, and if $g$ is another transformation then $gf$ stands for the composition $g\circ f$. Hence $(gf)\omega=g(f\omega)$.

Let $S$ be a vector semigroup. By its \emph{action} on a measurable space $\Omega$ we mean a family of transformations $(f^x\mid x\in S)$ with the following two properties: (1) $f^{x+y}=f^xf^y$ for all $x,y\in S$, and (in case $0\in S$) (2) $f^0$ is the identity on $\Omega$. If $0\not\in S$ and $(f^x\mid x\in S)$ is an action of $S$ on $\Omega$, we can extend it to the action $(f^x\mid x\in S\cup\{0\})$ by adding the identity transformation as $f^0$.

If $(f^x\mid x\in S)$ is an action then $x,-x\in S$ implies that $f^x$ is a bijection between $\Omega$ and $\Omega$, and $f^{-x}$ is its inverse. For any $x\in S$ and $W\subset\Omega$, we denote $f^{-x}(W)=(f^x)^{-1}(W)$. If $-x\in S$, that set coincide with the image of $W$ by the transformation $f^{-x}$, therefore our notation is not ambiguous. We say that a measurable set $W$ is \emph{$f^x$-invariant} if $f^{-x}(W)=W$.

Let $f=(f^x\mid x\in S)$ be an action of $S$ on $\Omega$ and $\Prob$ some probability on $\Omega$. We say that $\Prob$ is \emph{$f^x$-invariant} if $\Prob(f^{-x}(W))=\Prob(W)$ for measurable $W\subset\Omega$. If $\Prob$ is $f^x$-invariant for all $x\in S$, we call $f$ an \emph{action of $S$ on the probability space $(\Omega,\Prob)$}. We call that action \emph{ergodic}, if $\Prob(W)\in\{0,1\}$ for all measurable $W\subset\Omega$, which is $f^x$-invariant for some $0\ne x\in S$.

The proof of any ergodic theorem is usually preceded by some "maximal ergodic theorem". In our case the role of it is taken by the following theorem, although its formulation does not contain any $\max$.

\begin{theorem}\label{t:maxerg}
Let $k\ge 1$, $S$ be a $k$-dimensional vector semigroup, $A\subset S$ a finite set, $C=\cone(A)$, $S_C=S\cap C$ and $0\ne a\in A$. Let $(f^x\mid x\in S)$ be an ergodic action on a probability space $(\Omega,\Prob)$ and $Z\in L^k(\Prob)$ a random variable, such that $\Mean Z=0$ and the family $(Z(f^x)\mid x\in S)$ is almost independent. Then almost surely
\begin{equation*}
  \frac{1}{\abs{x}+s}\sum_{i=1}^{s}Z(f^{x+(i-1)a})\to 0,\quad\text{as $S_C\times\Zd_+\ni (x,s)\to\infty$.}
\end{equation*}
\end{theorem}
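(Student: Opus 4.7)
Set $T_s(x,\omega):=\sum_{i=1}^s Z(f^{x+(i-1)a}\omega)$; the aim is $T_s(x,\omega)/(|x|+s)\to 0$ a.s. Fix $\eps>0$. Since the sequence $(Z(f^{(i-1)a}))_{i\ge 1}$ consists of almost independent copies of the centered variable $Z\in L^k$, Proposition~\ref{t:M} (applied to $Z$ and $-Z$; for $k=1$, Birkhoff's theorem applied to the ergodic transformation $f^a$ substitutes) yields a nonnegative
\[
N(\omega):=\sup_{s\ge 0}(T_s(0,\omega)-\eps s)+\sup_{s\ge 0}(-T_s(0,\omega)-\eps s)\in L^{k-1}(\Prob).
\]
Since $T_s(x,\omega)=T_s(0,f^x\omega)$, the pointwise bound $|T_s(x,\omega)|\le\eps s+N(f^x\omega)$ gives $|T_s(x,\omega)|/(|x|+s)\le\eps+N(f^x\omega)/(|x|+s)$. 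For bounded $x$ with $s\to\infty$ the fractional term vanishes trivially, so the problem reduces to proving $N(f^x\omega)/|x|\to 0$ a.s.\ as $S_C\ni x\to\infty$. An essential tool will be the ray inequality
\[
N(f^{x+ja}\omega)\le 2N(f^x\omega)+4\eps j,\qquad x\in S_C,\ j\in\Zd_+,
\]
which follows from $T_s(x+ja,\omega)=T_{j+s}(x,\omega)-T_j(x,\omega)$ together with the defining formula for $N$.

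The main technical step will be to extract from Proposition~\ref{t:d} a cross-section $S'\subset S_C$ satisfying: (i) every $y\in S_C$ decomposes as $y=x'+ja$ with $x'\in S'$ and $j\in\Zd_+$; (ii) $|y|\ge c(|x'|+j)$ for some constant $c>0$; and (iii) $S'$ is contained in a vector semigroup of dimension at most $k-1$. Because $N(f^{x'}\omega)$ is measurable with respect to the data $(Z(f^{x'+ia}\omega))_{i\ge 0}$ on the ray through $x'$ in direction $a$, and rays from distinct $x'\in S'$ diverge in the transverse directions, the family $(N(f^{x'}\omega))_{x'\in S'}$ will be almost independent. Proposition~\ref{t:max}, applied to these nonnegative almost independent copies of $N\in L^{k-1}$ indexed by a semigroup of dimension $\le k-1$, then yields $N(f^{x'}\omega)/|x'|\to 0$ a.s.\ as $S'\ni x'\to\infty$.

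For $y=x'+ja\in S_C$, the ray inequality and~(ii) give
\[
\frac{N(f^y\omega)}{|y|}\le\frac{2N(f^{x'}\omega)}{c(|x'|+j)}+\frac{4\eps}{c},
\]
and as $y\to\infty$ the first summand tends to zero: either $|x'|\to\infty$, when the cross-section result applies, or $|x'|$ stays in a finite set and $j\to\infty$, in which case $N(f^{x'}(\omega))$ is a fixed finite number. Hence $\limsup N(f^y\omega)/|y|\le 4\eps/c$, and sending $\eps\to 0$ completes the argument. The main obstacle will be the cross-section construction — producing $S'$ satisfying (i)–(iii) from Proposition~\ref{t:d} and verifying (ii) via pointedness of the appropriate quotient cone — with the cases $a\in A_0$ (when $-a\in S_C$) and $a\in A_1$ likely requiring separate treatment.
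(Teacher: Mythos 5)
Your strategy is essentially the paper's: a one\-/parameter maximal function along the ray in direction $a$ (your $N$ is the paper's $M(a,\epsilon)=M_+(a,\epsilon)+M_-(a,\epsilon)$ evaluated at shifted points), an $L^{k-1}$ moment bound via Proposition~\ref{t:M}, and an application of Proposition~\ref{t:max} to the resulting almost independent family on a transverse semigroup of dimension at most $k-1$. In the case $a\in A_1$ this all goes through: Proposition~\ref{t:d} gives $S_C=T+\gp(A_0)+\bigcup_{a\in B}\sg(B)$ with $a\in B$, so on each piece the $a$-coordinate is a nonnegative integer, the transverse part $S'=L'\cap S_C$ with $L'=\lin(B\setminus\{a\})+L_0$ has dimension $k-1$, and your condition (ii) is the norm equivalence coming from $L=\Rd a\oplus L'$. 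Your ray inequality and the almost\-/independence of $(N(f^{x'})\mid x'\in S')$ (rays over transversally distant base points stay uniformly separated) are both correct.

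The genuine gap is the case $a\in A_0$, i.e.\ $-a\in S_C$, which you flag but do not resolve, and which your construction cannot absorb as stated: there your conditions (i)--(iii) are mutually inconsistent. If $-a\in S_C$ then for every $y\in S_C$ the whole backward ray $\{y-ja\mid j\ge 0\}$ lies in $S_C$, so a cross-section $S'$ with $S_C=S'+\Zd_+a$ must contain points arbitrarily far in the $-a$ direction along every line, forcing $\lin(S')=L$ and killing (iii). Moreover the obstruction is not only in the cross-section: for $y=x'+ta$ with $t<0$ the sum $\sum_{i=1}^{s}Z(f^{y+(i-1)a})$ runs through the points $x'-|t|a,\dots$, which are \emph{not} differences of forward partial sums based at $x'$, so the identity $T_s(x+ja)=T_{j+s}(x)-T_j(x)$ underlying both your pointwise bound and your ray inequality is unavailable. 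The repair (which is what the paper does) is to introduce a second, backward maximal function built from the transformation $f^{-a}$, i.e.\ $\sup_{j\ge 0}\sum_{i=1}^{j}(\pm Z(f^{x'-(i-1)a})-\epsilon)$, take the cross-section to be $L'\cap S_C$ with the $a$-coordinate ranging over all of $\Zd$, and split each partial sum into its forward and backward pieces (three cases according to the signs of $t$ and $t+s$), each controlled by the corresponding maximal function at the transverse base point. Without this two-sided version of $N$ the argument does not close for $a\in A_0$; with it, your outline becomes the paper's proof.
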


\begin{proof}
For short, set
\begin{equation*}
  \bar Z(x,s,\omega)=\sum_{i=1}^sZ(f^{x+(i-1)a}\omega).
\end{equation*}
It suffices to prove that almost surely
\begin{equation}\label{e:limsup}
  \varlimsup_{S_C\times\Zd_+\ni(x,s)\to\infty}\frac{\bar Z(x,s)}{\abs{x}+s}\le 0,
\end{equation}
because then the analogous inequality with $-Z$ instead of $Z$ gives
\begin{equation*}
  \varliminf_{S_C\times\Zd_+\ni(x,s)\to\infty}\frac{\bar Z(x,s)}{\abs{x}+s}\ge 0.
\end{equation*}

Denote $A_0=\{x\in A\mid -x\in S_C\}$, $A_1=A\be A_0$, $L_0=\lin(A_0)$ and $L=\lin(A)$. The cases, where $a\in A_0$ and where $a\in A_1$, are a bit different, but the difference is not very big, so we consider them together. Let $\calB$ be the set of all $B\subset A_1$, such that the family $(x+L_0\mid x\in B)$ is a basis of the quotient space $L/L_0$. By Proposition~\ref{t:d}, $S_C$ is the union of finitely many sets of the form $b+\gp(A_0)+\sg(B)$, where $b\in S_C$, $B\in\calB$ and (in the case, where $a\in A_1$) $a\in B$. Therefore it suffices to prove \refeq{limsup} with $b+S_0$ instead of $S_C$, where $S_0=\gp(A_0)+\sg(B)$.

Obviously, $\bar Z(b+y,s,\omega)=\bar Z(y,s,f^b\omega)$ and
\begin{equation*}
  \abs{b+y_n}+s_n\to\infty\iff\abs{y_n}+s_n\to\infty.
\end{equation*}
Since $\Prob$ is $f^b$-invariant, it suffices to prove that almost surely
\begin{equation}\label{e:limsup2}
  \varlimsup_{S_0\times\Zd_+\ni (y,s)\to\infty}\frac{\bar Z(y,s)}{\abs{y}+s}\le 0.
\end{equation}
Let $(a_1,\dots,a_p)\subset A_0$ be a basis of $L_0$ and $B=\{a_{p+1},\dots,a_k\}$. If $a\in A_0$, we may assume that $a_1=a$, and if $a\in A_1$ then $a\in B$ and we assume that $a_k=a$. Let $L'=\lin(a_2,\dots,a_k)$ in the first case and $L'=(a_1,\dots,a_{k-1})$ in the second, then in both cases $L=\Rd a\oplus L'$. Find a $c<\infty$, such that, for all $t\in\Rd$ and $z\in L'$,
\begin{equation*}
  \abs{t}+\abs{z}\le c\abs{ta+z}.
\end{equation*}

Each $y\in S_0$ is written in a unique way as $y=ta+z$ with $t\in\Zd$ and $z\in L'\cap S_C$; moreover, $t\ge 0$ in the case, where $a\in A_1$. The following identities then are easily checked: if $t\ge 0$ then
\begin{equation}\label{e:Z1}
  \bar Z(y,s,\omega)
  =\sum_{i=1}^{t+s}Z(f^{z+(i-1)a}\omega)
    -\sum_{i=1}^{t}Z(f^{z+(i-1)a}\omega),
\end{equation}
if $t+s\le 0$ then
\begin{equation}\label{e:Z2}
  \bar Z(y,s,\omega)
  =\sum_{j=1}^{\abs{t}+1}Z(f^{z-(j-1)a}\omega)
    -\sum_{j=1}^{\abs{t}+1-s}Z(f^{z-(j-1)a}\omega),
\end{equation}
and if $t<0<t+s$ then
\begin{equation}\label{e:Z3}
  \bar Z(y,s,\omega)
  =-Z(f^z\omega)+\sum_{j=1}^{\abs{t}+1}Z(f^{z-(j-1)a}\omega)+\sum_{i=1}^{-\abs{t}+s}Z(f^{z+(i-1)a}\omega).
\end{equation}

Consider the case, where $k=1$. Let $W$ be the set of all outcomes $\omega$, such that, for all $x\in S$,
\begin{equation*}
  \frac1n\sum_{i=1}^nZ(f^{(i-1)x}\omega)\tends{n\to\infty}0.
\end{equation*}
By the ergodic theorem, $\Prob(W)=1$. Let us fix $\omega\in W$ and prove that  \refeq{limsup2} holds at $\omega$. Let $S_0\times\Zd_+\ni(y_n,s_n)\to\infty$ be any sequence, for which
\begin{equation}\label{e:limsup3}
  \varlimsup_{S_0\times\Zd_+\ni (y,s)\to\infty}\frac{\bar Z(y,s,\omega)}{\abs{y}+s}
  =\lim_{n\to\infty}\frac{\bar Z(y_n,s_n,\omega)}{\abs{y_n}+s_n}.
\end{equation}
We need to show that the limit on the right hand side is nonpositive. In the considered case $L'=\{0\}$, therefore $y_n=t_na$ for some $t_n\in\Zd$, and $\abs{t_n}+s_n\to\infty$. Without loss of generality we can assume that either $t_n\ge 0$ for all $n$, or $t_n+s_n\le 0$ for all $n$, or $t_n<0<s_n$ for all $n$.

If $t_n\ge 0$ for all $n$ then without loss of generality we can assume that either $t_n\to\infty$ or $t_n=O(1)$. In both cases \refeq{Z1} implies
\begin{equation*}
  \bar Z(y_n,s_n,\omega)
  =\sum_{i=1}^{t_n+s_n}Z(f^{(i-1)a}\omega)
    -\sum_{i=1}^{t_n}Z(f^{(i-1)a}\omega)
  =o(t_n+s_n)
  =o(\abs{y_n}+s_n).
\end{equation*}
If $t_n+s_n\le 0$ for all $n$ then $0\le s_n\le\abs{t_n}$ and therefore $\abs{t_n}\to\infty$. Without loss of generality we can assume that either $\abs{t_n}-s_n\to\infty$ or $\abs{t_n}-s_n=O(1)$. In both cases \refeq{Z2} yields
\begin{multline*}
  \bar Z(y_n,s_n,\omega)
  =\sum_{j=1}^{\abs{t_n}+1}Z(f^{-(j-1)a}\omega)
    -\sum_{j=1}^{\abs{t_n}+1-s_n}Z(f^{-(j-1)a}\omega)\\
  =o(\abs{t_n}+s_n)
  =o(\abs{y_n}+s_n).
\end{multline*}
If $t_n<0<t_n+s_n$ for all $n$, then $0\le \abs{t_n}\le s_n$ and therefore $s_n\to\infty$. Without loss of generality we can assume that either $s_n-\abs{t_n}\to\infty$ or $s_n-\abs{t_n}=O(1)$, and that either $\abs{t_n}\to\infty$, or $\abs{t_n}=O(1)$. In all 4 cases \refeq{Z3} implies
\begin{multline*}
  \bar Z(y_n,s_n,\omega)
  =-Z(\omega)+\sum_{j=1}^{\abs{t_n}+1}Z(f^{-(j-1)a}\omega)+\sum_{i=1}^{s_n-\abs{t_n}}Z(f^{(i-1)a}\omega)\\
  =o(\abs{t_n}+s_n)
  =o(\abs{y_n}+s_n).
\end{multline*}

Now consider the case, where $k\ge 2$. For any $x\in S$ and $\epsilon$ denote
\begin{gather*}
  M_+(x,\epsilon)=\sup_{s\ge 0}\sum_{i=1}^s(Z(f^{(i-1)x})-\epsilon),\quad
  M_+(x,\epsilon)=\sup_{s\ge 0}\sum_{i=1}^s(-Z(f^{(i-1)x})-\epsilon)
\end{gather*}
and
\begin{equation*}
  M(x,\epsilon)=M_+(x,\epsilon)+M_-(x,\epsilon).
\end{equation*}
By the almost independence assumption and Proposition~\ref{t:M}, $\Mean M(x,\epsilon)^{k-1}<\infty$. The intersection $L'\cap S_C$ is a vector semigroup and its dimension does not exceed $\dim{L'}=k-1$. Therefore, by Proposition~\ref{t:max}, for almost all $\omega$,
\begin{equation}\label{e:Mz}
  \frac{M(x,\epsilon,f^z\omega)}{\abs{z}}\to 0,\quad\text{kai $L'\cap S_C\ni z\to\infty$}.
\end{equation}

Let us look at how random variables $\bar Z(y,s)$ with $y=ta+z$ are dominated by $M(\pm a,\epsilon)$. We will assume that $s\ge 1$, but the obtained inequalities will be also valid for $s=0$, because $\bar Z(y,0)=0$. If $t\ge 0$ then \refeq{Z1} implies
\begin{align*}
  \bar Z(y,s,\omega)
  &=\sum_{i=1}^{t+s}(Z(f^{z+(i-1)a}\omega)-\epsilon)
    +\sum_{i=1}^{t}(-Z(f^{z+(i-1)a}\omega)-\epsilon)+\epsilon(2t+s)\\
  &\le M(a,\epsilon,f^z\omega)+\epsilon(2c\abs{y}+s).
\end{align*}
If $t+s\le 0$ then \refeq{Z2} yields analogously
\begin{align*}
  &\bar Z(y,s,\omega)\\
  &=\sum_{j=1}^{\abs{t}+1}(Z(f^{z-(j-1)a}\omega)-\epsilon)
    +\sum_{j=1}^{\abs{t}+1-s}(-Z(f^{z-(j-1)a}\omega)-\epsilon)+\epsilon(2\abs{t}+2-s)\\
  &\le M(-a,\epsilon,f^z\omega)+\epsilon(2c\abs{y}+s).
\end{align*}
Finally, if $t<0<t+s$ then by \refeq{Z3},
\begin{align*}
  \bar Z(y,s,\omega)
  &=\sum_{j=1}^{\abs{t}}(Z(f^{z-(j-1)a}\omega')-\epsilon)+\sum_{i=1}^{-\abs{t}+s}(Z(f^{z+(i-1)a}\omega)-\epsilon)+\epsilon s\\
  &\le M(-a,\epsilon,f^{z}\omega')+M(a,\epsilon,f^{z}\omega)+\epsilon s
\end{align*}
with $\omega'=f^{-a}\omega$.

Let $\tilde W$ be the set off all outcomes $\omega$, such that \refeq{Mz} holds for $x=a$ and for $x=-a$ (in the case, where $a\in A_0$), and for all $\eps=\eps_l$, where $(\eps_l)$ is some fixed sequence tending to 0. Denote $W_1=\tilde W$ if $a\in A_1$, and $W_1=\tilde W\cap f^a(\tilde W)$ if $a\in A_0$. In both cases $\Prob(W_1)=1$. Let $W_2$ be the set of all outcomes $\omega$, for which
\begin{equation*}
  \frac1s\sum_{i=1}^sZ(f^{x+(i-1)a}\omega)\tends{s\to\infty}0,\quad x\in S.
\end{equation*}
By the ergodic theorem, $\Prob(W_2)=1$. Set $W=W_1\cap W_2$, then also $\Prob(W)=1$.

Fix $\omega\in W$ and prove that \refeq{limsup2} holds at $\omega$. Let $S_0\times\Zd_+\ni(y_n,s_n)\to\infty$ be any sequence, for which \refeq{limsup3} holds. Without loss of generality we can assume that either $\abs{y_n}\to\infty$, or $s_n\to\infty$ and all $y_n$ coincide with some fixed $y\in S_0$. In the second case limit \refeq{limsup3} equals 0, because $\omega\in W_2$. It remains to consider the first case --- where $\abs{y_n}\to\infty$.

Let $y_n=t_na+z_n$ with $t_n\in\Zd$ and $z_n\in L'\cap S_C$ (in the case, where $a\in A_1$, all $t_n$ are nonnegative). If $a\in A_1$, it follows from the majorization inequalities obtained above that
\begin{equation*}
  \frac{\bar Z(y_n,s_n,\omega)}{\abs{y_n}+s_n}\le\frac{M(a,\epsilon,f^{z_n}\omega)}{\abs{y_n}}+\epsilon(2c+1).
\end{equation*}
Without loss of generality we can assume that either $\abs{z_n}\to\infty$, or $t_n\to\infty$ and all $z_n$ coincide with some fixed $z\in L'\cap S_c$. In both cases the first summand in the right hand side of the inequality tends to 0, for any $\epsilon=\epsilon_l$ (in the first case this is true, because $\omega\in\tilde W$ and $\abs{y_n}\ge \abs{z_n}/c$, and in the second case --- because $\abs{y_n}\to\infty$). Hence
\begin{equation*}
  \lim_{n\to\infty}\frac{\bar Z(y_n,s_n,\omega)}{\abs{y_n}+s_n}\le\epsilon_l(2c+1)
\end{equation*}
for all $l$, which means that limit \refeq{limsup3} is indeed non-positive.

If $a\in A_0$ then the majorization inequalities yield
\begin{equation*}
  \frac{\bar Z(y_n,s_n,\omega)}{\abs{y_n}+s_n}\le\frac{M(a,\epsilon,f^{z_n}\omega)+M(-a,\epsilon,f^{z_n}\omega)+M(-a,\epsilon,f^{z_n}\omega')}{\abs{y_n}}+\epsilon(2c+1),
\end{equation*}
where $\omega'=f^a\omega$. For $\epsilon=\epsilon_l$, the first summand in the right hand side tends to 0 in both cases, where $\abs{z_n}\to\infty$ (because $\omega,\omega'\in\tilde W$), and where $z_n=z$ for all $n$. Hence limit \refeq{limsup3} is again non-positive.
\end{proof}

\paragraph{Subadditive random functions.}

Let $f=(f^x\mid x\in S)$ be an action of a vector semigroup $S$ on a probability space $(\Omega,\Prob)$ and $h=(h(x)\mid x\in S)$ a family of random variables defined on  $\Omega$. We think of $h$ as of a random function from $S$ to $\Rd$ and denote the value of $h(x)$ at outcome $\omega\in\Omega$ by $h(x,\omega)$. A random function $h$ is called \emph{$f$-subadditive}, if $h(0)=0$ and, for all $x,y\in S$ and $\omega\in\Omega$,
\begin{equation*}
  h(x+y,\omega)\le h(x,\omega)+h(y,f^x\omega).
\end{equation*}

If $h$ is $f$-subadditive and $\Mean h(x)^+<\infty$ for all $x\in S$ then, for all $x\in S$, the sequence $(h(nx)\mid n\ge 1)$ is subadditive in the usual sense and, by the Kingman's subadditive ergodic theorem, $h(nx)/n$ tends almost surely to some random variable $q(x)$. If the action $f$ is ergodic, the limit random variable is degenerate, therefore we can assume that $q$ is a non random function on $S$. Then, by the same Kingman's theorem, $q(x)=\lim_{n\to\infty}\Mean h(nx)/n$. By subadditivity and invariance, for all $x,y\in S$ and $t\in\Nd$,
\begin{equation*}
  q(x+y)=\lim_{n\to\infty}\frac{\Mean h(nx+ny)}{n}\le\lim_{n\to\infty}\frac{\Mean h(nx)+\Mean h(ny)}{n}=q(x)+q(y),
\end{equation*}
and
\begin{equation*}
  q(tx)=\lim_{n\to\infty}\frac{\Mean h(ntx)}{n}=t\lim_{n\to\infty}\frac{\Mean h(ntx)}{nt}=tq(x).
\end{equation*}
Hence $q$ is a $\Zd$-gauge on the semigroup $S$. By Proposition~\ref{t:kalibras}, the restriction of $q$ on $O\cap S$ (where $O$ is the asymptotic cone of $S$) is extended in the unique way to a gauge on $O$, which is called \emph{the gauge associated with $h$}.

\paragraph{Proof of Theorem~\ref{t:main}.}

Let $S$ be $k$-dimensional. For each $S$-cone $C=\cone(a_1,\dots,a_k)$ with all $\pi(a_i)\ne 0$, and each $j=1,\dots,k$, let $\tilde W(C,j)$ denote the set of all outcomes $\omega$, for which
\begin{equation*}
  \frac{1}{\abs{y}+s}\sum_{i=1}^s(h_+(a_j,g^{y+(i-1)\pi(a_j)}\omega)-\Mean h_+(a_j))\to 0,
\end{equation*}
as $\pi(S\cap C)\times\Zd_+\ni (y,s)\to\infty$. By Theorem~\ref{t:maxerg}, $\Prob(\tilde W(C,j))=1$. Set $\tilde W=\bigcap_{C,j}\tilde W(C,j)$ and $W_1=\bigcap_{y\in T}g^{-y}(\tilde W)$. Since the set of all $S$-cones is countable, $\Prob(W_1)=1$.

Let $W_2$ and $W_3$ be the sets of all outcomes $\omega$, such that, for all $a\in S$, respectively,
\begin{equation*}
  \frac{h_+(a,g^y\omega)}{\abs{y}}\to 0,\quad\text{as $T\ni y\to\infty$,}
\end{equation*}
and
\begin{equation*}
  \frac{h(na,\omega)}{n}\tends{n\to\infty}q(a).
\end{equation*}
By Proposition~\ref{t:max} and the Kingman's subadditive ergodic theorem, $\Prob(W_2)=\Prob(W_3)=1$. Set $W=W_1\cap W_2\cap W_3$, then $\Prob(W)=1$ as well.

Let us prove that \refeq{shape} holds for any $\omega\in W$ and any sequence $S\ni x_n\to\infty$ with $x_n/\abs{x_n}\to x\in O$. By Proposition~\ref{t:vidiniskugis}, there exists an $S$-cone $C=\cone(a_1,\dots,a_k)$, such that $\pi(a_i)\ne 0$ for all $i$ and $x\in\rint(C)$. We can assume that $x_n\in C$ for all $n$. Denote $A=\{a_1,\dots,a_k\}$ and $S_C=S\cap C$. Since the cone $C$ is pointed, Proposition~\ref{t:d} yields that $S_C=A_0+\sg(A)$ with some finite set $A_0\subset S_C$. The sequence $h(x_n,\omega)/\abs{x_n}$ is thus decomposed into a finite number of subsequences (in each subsequence $x_n=a+x_n'$ with some $a\in A_0$ and $x_n'\in\sg(A)$) and it suffices to prove that each subsequence tends to $q(x)$.

Suppose $x_n=a+x_{1,n}$ with $a\in A_0$ and $x_{1,n}\in\sg(A)$. By subadditivity,
\begin{equation*}
  \frac{h(x_n,\omega)}{\abs{x_n}}
  \le\frac{h(x_{1,n},\omega)}{\abs{x_n}}+\frac{h_+(a,g^{\pi(x_{1,n})}\omega)}{\abs{x_n}}.
\end{equation*}
The second summand in the right hand side tends to 0, because so does any its subsequence with $\abs{\pi(x_{1,n})}\to\infty$ (since $\omega\in W_2$ and $\abs{\pi(x_{1,n})}\le \norm{\pi}\,\abs{x_{1,n}}\sim \norm{\pi}\,\abs{x_n}$), and also any its subsequence with $\pi(x_{1,n})=y$ (since $\abs{x_n}\to\infty$). Therefore
\begin{equation*}
  \frac{h(x_n,\omega)}{\abs{x_n}}
  \le\frac{h(x_{1,n},\omega)}{\abs{x_n}}+o(1).
\end{equation*}

By Proposition~\ref{t:sg}, there exists a $d\in\Nd$, such that $dz\in\sg(A)$ for all $z\in S_C$. Denote $a'=(d-1)a$ and $x_{2,n}=a'+x_n=da+x_{1,n}$. Then $a'\in S_C$, $x_{2,n}\in\sg(A)$ and similarly to above we get
\begin{equation*}
  \frac{h(x_n,\omega)}{\abs{x_n}}
  \ge\frac{h(x_{2,n},\omega)}{\abs{x_n}}-\frac{h_+(a',g^{\pi(x_{2,n})}\omega)}{\abs{x_n}}
  =\frac{h(x_{2,n},\omega)}{\abs{x_n}}+o(1).
\end{equation*}
Since $\abs{x_{j,n}}\sim\abs{x_n}$, it suffices to prove that for $j=1,2$
\begin{equation*}
  \frac{h(x_{j,n},\omega)}{\abs{x_{j,n}}}\to q(x).
\end{equation*}
We can use the facts that $x_{j,n}\in\sg(A)$ and $x_{j,n}/\abs{x_{j,n}}\to x$.

We do not need anymore the sequence $(x_n)$ used to build $x_{j,n}$, so we omit, for short, the index $j$ and write $x_n$ instead of $x_{j,n}$. Let $z^i$ denote the coordinates of a vector $z\in L$ in the basis  $(a_1,\dots,a_k)$.

Fix $\epsilon<\frac12$ and find $y\in \rint(C)\cap S^*$, such that
\begin{equation}\label{e:xy}
  1-\epsilon<\frac{x^i}{y^i}<1+\epsilon\quad\text{for $i=1,\dots,k$.}
\end{equation}
Let $p$ be a natural number, such that $py^i\in\Nd$ for all $i$. Define
\begin{equation*}
  s_n=\min_i\Floor{\frac{(1-\epsilon)x_n^i}{py^i}}p,\quad
  t_n=\max_i\Ceiling{\frac{(1+\epsilon)x_n^i}{py^i}}p.
\end{equation*}
For all $i$,
\begin{equation*}
  s_ny^i\le x_n^i\le t_ny^i
\end{equation*}
and all three numbers are integers, therefore $x_n-s_ny\in \sg(A)$ and $t_ny-x_n\in \sg(A)$.

Clearly, $x_n^i\sim x^i\abs{x_n}$ and therefore $s_n\sim\abs{x_n}s$, $t_n\sim\abs{x_n}t$ with
\begin{equation}\label{e:st}
  s=(1-\epsilon)\min_i\frac{x^i}{y^i}\quad\text{ir}\quad
  t=(1+\epsilon)\max_i\frac{x^i}{y^i}.
\end{equation}
Then $(1-\epsilon)^2<s\le t<(1+\epsilon)^2$, hence, for $n$ large enough,
\begin{equation*}
  (1-\epsilon)x^i\abs{x_n}<x_n^i<(1+\epsilon)x^i\abs{x_n}\quad\text{and}\quad
  (1-\epsilon)^2\abs{x_n}<s_n\le t_n<(1+\epsilon)^2\abs{x_n}.
\end{equation*}
It yields
\begin{equation*}
  x_n^i-s_ny^i
  \le(1+\epsilon)x^i\abs{x_n}-(1-\epsilon)^2y^i\abs{x_n}
  \le\Bigl(1+\epsilon-\frac{(1-\epsilon)^2}{1+\epsilon}\Bigr)x^i\abs{x_n}
  \le 4\epsilon x^i\abs{x_n}
\end{equation*}
and
\begin{equation*}
  t_ny^i-x_n^i
  \le(1+\epsilon)^2y^i\abs{x_n}-(1-\epsilon)x^i\abs{x_n}
  \le\Bigl(\frac{(1+\epsilon)^2}{1-\epsilon}-1+\epsilon\Bigr)x^i\abs{x_n}
  \le 8\epsilon x^i\abs{x_n}.
\end{equation*}
Moreover,
\begin{equation*}
  \frac{x_n^i-s_ny^i}{\abs{x_n}}
  \to x^i-sy^i
  \ge x^i-(1-\epsilon)x^i=\epsilon x^i
\end{equation*}
and
\begin{equation*}
  \frac{t_ny^i-x_n^i}{\abs{x_n}}
  \to ty^i-x^i
  \ge (1+\epsilon)x^i-x^i=\epsilon x^i,
\end{equation*}
hence $x_n^i-s_ny^i\to\infty$ and $t_ny^i-x_n^i\to\infty$.

Now, by subadditivity,
\begin{equation}\label{e:jaubeveik1}
  h(x_n,\omega)\le h(s_ny,\omega)+h(x_n-s_ny,f^{s_ny}\omega)
\end{equation}
and
\begin{equation}\label{e:jaubeveik2}
  h(x_n,\omega)\ge h(t_ny,\omega)-h(t_ny-x_n,f^{x_n}\omega);
\end{equation}
moreover,
\begin{equation}\label{e:jaubeveik3}
  \frac{h(s_ny,\omega)}{\abs{x_n}}
  \to s q(y)\quad\text{ir}\quad
  \frac{h(t_ny,\omega)}{\abs{x_n}}\to tq(y),
\end{equation}
because $\omega\in W_3$.

Let us estimate two remaining terms. Denote $z_n=x_n-s_ny$, then $z_n^i\to\infty$ and $z_n^i\le 4\epsilon x^i\abs{x_n}$ for $n$ large enough. Again by subadditivity,
\begin{equation*}
  h(z_n,f^{s_ny}\omega)
  \le\sum_{i=1}^k\sum_{j=1}^{z_n^i}h_+(a_i,f^{y_{in}+(j-1)a_i}\omega),
\end{equation*}
where $y_{in}=s_ny+\sum_{1\le i'<i}z_n^{i'}a_{i'}$. Since $z_n^i\to\infty$ and $\omega\in W_1$, we get
\begin{multline*}
  \sum_{j=1}^{z_n^i}(h_+(a_i,f^{y_{in}+(j-1)a_i}\omega)-\Mean h_+(a_i))
  =o(z_n^i+\abs{\pi(y_{in})})\\
  =o(z_n^i+\norm{\pi}\,\abs{y_{in}})
  =o(\abs{x_n}).
\end{multline*}
Therefore
\begin{equation*}
  \varlimsup_{n\to\infty}\frac{h(x_n-s_ny,f^{s_ny}\omega)}{\abs{x_n}}
  \le\sum_{i=1}^k\varlimsup_{n\to\infty}\frac{z_n^i}{\abs{x_n}}\Mean h_+(a_i)
  \le c\epsilon
\end{equation*}
with $c=4\sum_{i=1}^kx^i\Mean h_+(a_i)$.

Similarly to above,
\begin{equation*}
  \varlimsup_{n\to\infty}\frac{h(t_ny-x_n,f^{x_n}\omega)}{\abs{x_n}}\le 2c\epsilon.
\end{equation*}
Then \refeq{jaubeveik1}--\refeq{jaubeveik3} yield
\begin{equation*}
  tq(y)-2c\epsilon\le\varliminf_{n\to\infty}\frac{h(x_n,\omega)}{\abs{x_n}}\le \varlimsup_{n\to\infty}\frac{h(x_n,\omega)}{\abs{x_n}}\le sq(y)+c\epsilon.
\end{equation*}
Here $\epsilon$ can be arbitrary small, $y$ depends on $\epsilon$ but satisfies \refeq{xy}, $s$ and $t$ are calculated by \refeq{st}, and $c$ does not depend on $\epsilon$. If $\epsilon$ approaches 0, $y$ tends to $x$, and $s,t$ to 1. Therefore taking the limits, as $\epsilon\to 0$, we get $h(x_n,\omega)/\abs{x_n}\to q(x)$.


\end{document}